\newtheorem{thm}{Theorem}[section]
\newtheorem{prop}[thm]{Proposition}
\newtheorem{prop-def}[thm]{Proposition-Definition}
\newtheorem{lem}[thm]{Lemma}
\newtheorem{claim}[thm]{Claim}
\theoremstyle{definition}
\newtheorem{definition}[thm]{Definition}
\newtheorem{notation}[thm]{Notation}
\theoremstyle{remark}
\newtheorem{remark}[thm]{Remark}
\numberwithin{equation}{section}
\newcommand{\bQ}{\mathbb{Q}}
\newcommand{\bR}{\mathbb{R}}
\newcommand{\bP}{\mathbb{P}}
\newcommand\OO{{\mathcal{O}}}
\newcommand{\Vol}{\operatorname{Vol}}
\newcommand{\irr}{\operatorname{irr}}
\begin{document}

\title{A canonical Fano threefold has degree $\leq 72$}
\dedicatory{Dedicated to Professor Meng Chen on the occasion of his 60th birthday}
\date{\today}
\author{Chen Jiang}
\address{Chen Jiang, Shanghai Center for Mathematical Sciences \& School of Mathematical Sciences, Fudan University, Shanghai 200438, China}
\email{chenjiang@fudan.edu.cn}

\author{Tianqi Zhang}
\address{Tianqi Zhang, Shanghai Center for Mathematical Sciences, Fudan University, Shanghai 200438, China}
\email{tqzhang22@m.fudan.edu.cn}

\author{Yu Zou}
\address{Yu Zou, College of Mathematics and Statistics, Chongqing University \& Key Laboratory of Nonlinear Analysis and its Applications (Chongqing University), Ministry of Education, Chongqing 401331, China}
\email{fishlinazy@cqu.edu.cn}

%\thanks{The author was supported}

\begin{abstract}
We show that the anti-canonical volume of a canonical weak Fano $3$-fold is at most $72$. This upper bound is optimal. 
\end{abstract}

\keywords{Fano threefolds, anti-canonical volumes, boundedness}
\subjclass[2020]{14J45, 14J30, 14J17}
\maketitle
\pagestyle{myheadings} \markboth{\hfill C.~Jiang, T.~Zhang, Y.~Zou
\hfill}{\hfill A canonical Fano threefold has degree $\leq 72$\hfill}

%\tableofcontents

\section{Introduction}
Throughout this paper, we work over the field of complex numbers $\mathbb{C}$.

A normal projective variety $X$ is called a {\it
Fano variety} (resp., {\it weak Fano variety}) if the anti-canonical divisor $-K_X$ is ample (resp., nef and big). 

According to the minimal model program, Fano varieties are one of the most important research objects in birational geometry. As canonical Fano $3$-folds form a bounded family (see \cite{Kaw92, KMMT00,Bir16b}), different kinds of invariants of canonical Fano $3$-folds are bounded. The {\it anti-canonical volume} (also called {\it degree}) of a Fano $3$-fold is one of the most important invariants and it plays an important role in the classification of smooth Fano $3$-folds (see \cite{IP99}).
Motivated by the classification theory of canonical Fano $3$-folds, we aim to find the optimal upper bound for their anti-canonical volumes.

It was conjectured by Prokhorov (after Fano and Iskovskikh) that the optimal upper bound for the anti-canonical volume of a canonical Fano $3$-fold should be $72$. This conjecture was confirmed by Prokhorov for Gorenstein canonical Fano $3$-folds and for $\bQ$-factorial terminal Fano $3$-folds of Picard number $1$ (\cite{Prok05, Prok07}). 
Later, the first author and the third author gave the first explicit upper bound $324$ for anti-canonical volumes of canonical Fano $3$-folds (\cite{JZ23}). Recently, the first author, Haidong Liu, and Jie Liu proved this conjecture for $\mathbb{Q}$-factorial canonical Fano $3$-folds of Picard number $1$ (\cite{JLL25}). See \cite{JZ23} and the reference therein for related results in this direction.

As the main result of this paper, we show that $72$ is the optimal upper bound for anti-canonical volumes of all canonical weak Fano $3$-folds and confirm Prokhorov's conjecture. The equality case is also characterized. 
\begin{thm}\label{mainthm}
Let $X$ be a canonical weak Fano $3$-fold. Then the following assertions hold:
\begin{enumerate}
 \item $(-K_{X})^3\leq 72$;
 \item for any positive integer $m$,
 $
h^0(X, -mK_X)\leq (2m+1)(6m^2+6m+1)$;
 
 \item one of the above inequalities is an equality if and only if there is a birational morphism $g: X\to X_0$ where $X_0\simeq \mathbb{P}(1,1,1,3)$ or $\mathbb{P}(1,1,4,6)$ and $g^*K_{X_0}=K_X$; in particular, if $X$ is a canonical Fano $3$-fold, then one of the above inequalities is an equality if and only if $X\simeq X_0$.
\end{enumerate}
\end{thm}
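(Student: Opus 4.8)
\emph{Proof proposal.}

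\textbf{Step 1: reduction to the $\mathbb{Q}$-factorial terminal case.} Take a $\mathbb{Q}$-factorial terminalization $h\colon Y\to X$. Then $Y$ is a $\mathbb{Q}$-factorial terminal weak Fano $3$-fold, $h$ is crepant, $(-K_Y)^3=(-K_X)^3$, and $H^0(Y,-mK_Y)\cong H^0(X,-mK_X)$ for every $m\ge 0$; moreover the anti-canonical model $X_0$ of $Y$ equals that of $X$, and $g\colon X\to X_0$ (the anti-canonical morphism) is crepant. So it suffices to prove (1) and (2) for $Y$ and to show $X_0\simeq\mathbb{P}(1,1,1,3)$ or $\mathbb{P}(1,1,4,6)$ in the equality case.

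\textbf{Step 2: reducing (2) to (1).} Since $-(m+1)K_Y$ is nef and big, Kawamata--Viehweg vanishing gives $h^0(Y,-mK_Y)=\chi(Y,\mathcal{O}_Y(-mK_Y))$, and Reid's Riemann--Roch formula together with $\chi(\mathcal{O}_Y)=1$ rewrites this as
\[
h^0(Y,-mK_Y)=\frac{(-K_Y)^3}{12}\,m(m+1)(2m+1)+(2m+1)+\sum_{Q\in\mathcal{B}}\Bigl(c_Q(-mK_Y)-\frac{m(r_Q^2-1)}{12\,r_Q}\Bigr),
\]
with $\mathcal{B}$ the basket of $Y$. Each summand on the right is $\le 0$, and is $<0$ once $m\ge 1$ and $r_Q\ge 2$; hence $(-K_Y)^3\le 72$ immediately gives $h^0(Y,-mK_Y)\le(2m+1)(6m^2+6m+1)$, and equality for some $m\ge 1$ (or, in the limit $m\to\infty$, equality in (1)) forces $(-K_Y)^3=72$ and $\mathcal{B}=\varnothing$, i.e.\ $Y$ Gorenstein. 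Then $X_0$ is a Gorenstein canonical Fano $3$-fold of degree $72$, hence $\simeq\mathbb{P}(1,1,1,3)$ or $\mathbb{P}(1,1,4,6)$ by Prokhorov's classification \cite{Prok05}. Thus everything reduces to proving $(-K_Y)^3\le 72$.

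\textbf{Step 3: the MMP.} Run a $K_Y$-MMP; as $-K_Y$ is big it terminates with a Mori fibre space $\pi\colon Y'\to Z$, $\dim Z\le 2$. Flips leave every $h^0(-mK)$ unchanged (both sides compute $h^0$ of the reflexive sheaf $\mathcal{O}(-mK)$ on the base of the flipping contraction), and a divisorial contraction $f\colon Y_i\to Y_{i+1}$ does not decrease it, since $-mK_{Y_i}=f^*(-mK_{Y_{i+1}})-m\,a(E)\,E$ with $a(E)>0$. Hence $h^0(Y,-mK_Y)\le h^0(Y',-mK_{Y'})$ for all $m$, and $(-K_Y)^3\le\operatorname{vol}(-K_{Y'})$; so it suffices to bound the anti-canonical volume of $Y'$. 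There are three cases: (i) $\dim Z=0$: $Y'$ is a $\mathbb{Q}$-factorial terminal Fano of Picard number $1$, so $(-K_{Y'})^3\le 72$ by \cite{JLL25} (see also \cite{Prok05,Prok07}); (ii) $\dim Z=1$: $Z\simeq\mathbb{P}^1$ and $Y'\to\mathbb{P}^1$ is a del Pezzo fibration; (iii) $\dim Z=2$: $Y'\to Z$ is a conic bundle over a rational surface.

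\textbf{Step 4: fibrations, and conclusion.} The main obstacle is cases (ii) and (iii): one must prove the \emph{optimal} bound $\operatorname{vol}(-K_{Y'})\le 72$ there and describe when equality holds. I would push forward to $Z$ and bound $h^0(Z,\pi_*\mathcal{O}(-mK_{Y'}))$ using adjunction along the fibres, boundedness of weak del Pezzo surfaces, and the geometry of the relative anti-canonical sheaf and the discriminant; the delicate point is that $Y'$ need not be weak Fano, so $-K_{Y'}$ may fail to be nef and $\operatorname{vol}(-K_{Y'})$ has to be read off a birational model realizing its Zariski decomposition rather than from $(-K_{Y'})^3$. Equality should force $Y'$ into a short explicit list --- e.g.\ the conic bundle $\mathbb{P}_{\mathbb{P}^2}(\mathcal{O}\oplus\mathcal{O}(3))\to\mathbb{P}^2$ and a del Pezzo fibration over $\mathbb{P}^1$, with anti-canonical models $\mathbb{P}(1,1,1,3)$ and $\mathbb{P}(1,1,4,6)$ respectively. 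Finally, since divisorial contractions strictly increase the anti-canonical volume, equality $(-K_Y)^3=72$ forces the $K_Y$-MMP to consist only of flips; then $h^0(Y,-mK_Y)=h^0(Y',-mK_{Y'})$ for all $m$, so by the formula of Step 2 the basket of $Y$ is empty and $Y$ is Gorenstein with anti-canonical model $X_0\in\{\mathbb{P}(1,1,1,3),\mathbb{P}(1,1,4,6)\}$, which gives (3) (and $X\simeq X_0$ when $X$ is itself Fano).
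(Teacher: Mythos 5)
Your Steps 1--3 line up with the beginning of the paper's argument: terminalize, pass from (2) to (1) via Reid's Riemann--Roch (the paper cites \cite[2.2(3)]{CJ16} for exactly this inequality), and run a $K$-MMP to a Mori fiber space. But Step 4 is where the actual work of the theorem lies, and there you have a sketch, not a proof.

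Concretely, you correctly isolate the problem --- bound $\operatorname{vol}(-K_{Y'})$ when $Y'\to Z$ is a del Pezzo fibration or a conic bundle, with $-K_{Y'}$ merely big and not nef --- and then write ``I would push forward to $Z$ and bound \dots using adjunction along the fibres, boundedness of weak del Pezzo surfaces, and the geometry of the relative anti-canonical sheaf and the discriminant.'' This does not engage with the genuine obstacles. For instance, the base of a conic bundle coming from a canonical weak Fano can be a del Pezzo surface with worse than Du Val singularities, and there is no usable classification to feed into such an argument; the paper explicitly flags this as the reason a naive fiberwise estimate fails. What the paper actually does is much more structured: Proposition~\ref{prop MFS} builds \emph{two} models at once --- the Mori fiber space $Y\to S$ and a $\mathbb{Q}$-factorial \emph{canonical weak Fano} model $Z$ isomorphic to $Y$ in codimension one --- and then Theorem~\ref{thm picard 23} gives the key trichotomy for $Z$: either $Z$ contracts to a $\mathbb{Q}$-factorial canonical Fano of Picard number one (reduce to \cite{JLL25}), or $(-K_Z)^3\le 64$, or $-K_Z-\frac{2}{\operatorname{irr}(F)}F$ is not big for the movable prime divisor $F$. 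The last branch is what makes the volume estimate on $G$ close: Proposition~\ref{prop final} picks a basepoint-free $G=f^*H$ on the fibered model, uses the trichotomy to assume $-K_Y-\frac{2}{\operatorname{irr}(G)}G$ is not big, and then computes $\int\operatorname{vol}(G,(-K_Y-tG)|_G)\,dt$ explicitly via Zariski decomposition on Hirzebruch/elliptic ruled surfaces (Lemmas~\ref{lem Fn En divisors} and~\ref{lem E-1 divisors}). None of this --- the second model, the $\mathbb{Q}$-fixed-divisor induction behind Theorem~\ref{thm picard 23}, the irrationality trick, the restricted-volume integral --- appears in your sketch, and I don't see how ``adjunction along the fibres plus boundedness of del Pezzo surfaces'' produces the sharp constant $72$ rather than something like the $324$ of \cite{JZ23}.

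There is also a gap in your equality analysis. You assert that a divisorial contraction in a $K$-MMP \emph{strictly} increases the anti-canonical volume, and deduce that equality forces the MMP to be all flips. That claim needs justification and is not obviously true: $-K_{Y_{i+1}}$ on the target need not be nef, and the contracted divisor $E$ can sit entirely inside the augmented base locus of $f^*(-K_{Y_{i+1}})$, in which case $\operatorname{vol}(f^*(-K_{Y_{i+1}})-a(E)E)=\operatorname{vol}(f^*(-K_{Y_{i+1}}))$. The paper avoids this by a different route: once one knows there is a birational contraction from the terminalization to $Z_0\in\{\mathbb{P}(1,1,1,3),\mathbb{P}(1,1,4,6)\}$ with $(-K_{X'})^3=(-K_{Z_0})^3=72$, it compares the two nef pullbacks on a common resolution and invokes \cite[Theorem~A]{FKL16} to conclude the exceptional correction vanishes, which gives the crepant morphism $g\colon X\to X_0$. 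Your deduction of ``basket empty, hence Gorenstein'' also runs in the wrong direction: $(-K_Y)^3=72$ alone does not kill the basket; the Gorenstein-ness comes \emph{after} identifying the crepant model $X_0$, not before.

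In short: the reductions are fine and conventional, but the core bound for fibrations/conic bundles and the rigidity step at equality are exactly the parts left unproved, and they require the second (weak Fano) model and the Picard-number-$\ge 2$ geometry (Theorem~\ref{thm picard 23}) that your proposal does not construct.
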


\begin{remark}\label{rem vol>64}
\begin{enumerate}
   \item One can use toric geometry to describe more explicitly those canonical weak Fano $3$-folds in Theorem~\ref{mainthm}(3) which are just crepant extractions of $X_0$.
For example, if $X_0\simeq \mathbb{P}(1,1,1,3)$, 
then either $X\simeq X_0$ or $X\simeq \mathbb{P}_{\mathbb{P}^2}(\mathcal{O}_{\mathbb{P}^2}\oplus \mathcal{O}_{\mathbb{P}^2}(3))$; for the computation related to $X_0\simeq \mathbb{P}(1,1,4,6)$, we omit the detail and refer to \cite[Remark~6.2]{Lai21}.

    \item In the proof of Theorem~\ref{mainthm}, it is shown that if $X$ is a canonical weak Fano $3$-fold with $(-K_X)^3>64$, then there exists a birational contraction $X'\dashrightarrow X_0$ such that $X_0$ is a $\mathbb{Q}$-factorial canonical Fano $3$-fold with $\rho(X_0)=1$, where $X'$ is a $\mathbb{Q}$-factorial terminalization of $X$. 
\end{enumerate}
 
\end{remark} 

We briefly explain the ideas in the proof of Theorem~\ref{mainthm}.
By using the minimal model program, we can construct two birational models of $X$: one is a terminal $3$-fold $Y$ with a fibration structure $Y\to S$ and the other is a $\mathbb{Q}$-factorial canonical weak Fano $3$-fold $Z$, where $\rho(Y)=\rho(Z)$. Then the problem is reduced to finding the upper bound for $(-K_Z)^3=\Vol(Y, -K_Y)$.
This construction combines ideas in \cite{CJ20, JZ23}.

We may assume that $Z$ has no birational contraction to a $\mathbb{Q}$-factorial canonical Fano $3$-fold with $\rho=1$, otherwise the problem is solved due to \cite{JLL25}. This recent development allows us to work with the model $Z$ (with canonical singularities) instead of being restricted to the model $Y$ (with terminal singularities) as in \cite{JZ23}. 

Consider the case $\rho(Y)=\rho(Z)=2$ first. 
We can partly use the computation of Lai in \cite{Lai21} (see also \cite{LaiLee}), where Lai treated $\mathbb{Q}$-factorial terminal Fano $3$-folds with $\rho=2$ by studying Mori fiber spaces of $Z$. The most difficult part here is when both Mori fiber spaces of $Z$ are conic bundles, 
whose bases are del Pezzo surfaces which might have worse than Du Val singularities. The lack of explicit classification of such surfaces makes the computation in \cite{Lai21} impossible for $Z$.
One advantage
of our construction is that $Y$ is one of the Mori fiber spaces of $Z$ and is terminal, so we can work on $Y$ instead of $Z$. But here $Y$ is no longer a weak Fano $3$-fold and we can not easily bound the volume using intersection numbers. 

Following the ideas in \cite{Jiang18, Jiang21}, to bound the anti-canonical volume of $Y$ from above, it suffices to give an upper bound for the number $w$ where $-K_Y-wG$ is big for a well-chosen free divisor $G$ on $Y$. In fact, we can show that $w<2$ if both Mori fiber spaces of $Z$ are conic bundles. 
This idea works for more general case 
$\rho(Y)=\rho(Z)\geq 3$ as well by using the concept of $\mathbb{Q}$-fixed prime divisor (see Definition~\ref{fixed divisor}): if $Z$ has a $\mathbb{Q}$-fixed prime divisor, then we can contract this divisor to reduce to smaller Picard number case; otherwise, $Z$ has a Mori fiber space which is a conic bundle and gives a good upper bound for $w$. 

Finally, the upper bound of $w$ reduces our problem to a detailed calculation of volumes of divisors on the surface $G$ which eventually gives the optimal upper bound for $(-K_Z)^3$.

This paper is organized as follows. In Section~\ref{sec 2}, we introduce definitions and basic knowledge. In Section~\ref{sec 3}, we study birational models of weak Fano varieties and construct $2$ birational models of a $\mathbb{Q}$-factorial terminal weak Fano $3$-fold (Proposition~\ref{prop MFS}). In Section~\ref{sec 4}, we study the geometry of $\mathbb{Q}$-factorial canonical weak Fano $3$-folds with $\rho\geq 2$ 
(Theorem~\ref{thm picard 23}). In Section~\ref{sec 5}, we prove the main theorem.

\section{Preliminaries}\label{sec 2}
We adopt standard notation and definitions in \cite{KM} and will freely use them.

%We use $\equiv$, $\sim_\bQ$ and $\sim_\bR$ to denote the numerical equivalence, $\bQ$-linear equivalence and $\bR$-linear equivalence, respectively.

\subsection{Singularities}
\begin{definition}
A {\it pair} $(X, B)$ consists of a normal variety $X$ and an effective
$\bR$-divisor $B$ on $X$ such that
$K_X+B$ is $\bR$-Cartier.
\end{definition}

\begin{definition}\label{def sing}
Let $(X, B)$ be a pair. Let $f: Y\to X$ be a log
resolution of $(X, B)$, write
\[
K_Y =f^*(K_X+B)+\sum a_iE_i,
\]
where $E_i$ are distinct prime divisors on $Y$ satisfying $f_*(\sum a_iE_i)=-B$. 
The coefficient $a_i$ is called the {\it discrepancy} of $E_i$ with respect to $(X, B)$, and is denoted by $a(E_i, X, B)$. 
The pair $(X,B)$ is called
\begin{enumerate}
\item \emph{Kawamata log terminal} ({\it klt},
for short) if $a_i>-1$ for all $i$;

\item \emph{terminal} if $a_i> 0$ for all $f$-exceptional divisors $E_i$ and for all $f$;

\item \emph{canonical} if $a_i\geq 0$ for all $f$-exceptional divisors $E_i$ and for all $f$.
\end{enumerate}
Usually, we say that $X$ is {\it terminal} (resp., {\it canonical}, {\it klt}) if $(X,0)$ is terminal (resp., canonical, klt).

\end{definition}

\subsection{Volumes}
\begin{definition}
Let $X$ be an $n$-dimensional projective variety and $D$ be a Cartier divisor on $X$. The {\it volume} of $D$ is defined by
\[
\Vol(X, D)=\limsup_{m\to \infty}\frac{h^0(X, \OO_X(mD))}{m^n/n!}.
\]
For more details and properties of volumes, we refer to \cite[2.2.C]{Positivity1} and \cite[11.4.A]{Positivity2}. Moreover, by homogeneous property and continuity of volumes, the definition can be extended to $\bR$-Cartier $\bR$-divisors. 
An $\bR$-Cartier $\bR$-divisor $D$ on $X$ is big if and only if $\Vol(X, D)>0$. 
Note that if $D$ is nef, then $\Vol(X, D)=D^n$. 
\end{definition}

The following lemma is very useful for estimating volumes by induction on dimensions. 

\begin{lem}\label{lem vol inequality}
 Let $X$ be a smooth projective variety. Let $D$ be a big $\mathbb{R}$-Cartier $\mathbb{R}$-divisor and let $E$ be a prime divisor on $X$. Then for any positive real number $x$,
 \[
 \Vol(X, D)-\Vol(X, D-xE)\leq \dim X\cdot \int_{0}^x\Vol(E, (D-tE)|_E) \,\mathrm{d}t.
 \]
 
\end{lem}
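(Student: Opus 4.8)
The plan is to deduce this volume inequality from the standard asymptotic comparison of $h^0(X, \OO_X(mD))$ with $h^0(X, \OO_X(mD - mxE))$, controlled by the restriction sequence to $E$. First I would reduce to the case where $D$ is a $\bQ$-Cartier $\bQ$-divisor and $x$ is rational: by continuity and homogeneity of the volume function on $\bR$-divisors (as recalled above), both sides of the claimed inequality are continuous in $(D, x)$, so it suffices to prove it on a dense set. Replacing $D$ by a positive multiple, I may even assume $D$ is Cartier and $x$ a positive integer.

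Next, for each integer $m$ divisible enough, consider the exact sequence $0 \to \OO_X(mD - jE) \to \OO_X(mD - (j-1)E) \to \OO_E\big((mD - (j-1)E)|_E\big) \to 0$ and sum over $j = 1, \dots, mx$, telescoping to obtain
\[
h^0(X, \OO_X(mD)) - h^0(X, \OO_X(mD - mxE)) \leq \sum_{j=1}^{mx} h^0\!\big(E, \OO_E((mD - (j-1)E)|_E)\big).
\]
Now I would estimate the right-hand side asymptotically. Writing $n = \dim X$, the left side is $\frac{m^n}{n!}\big(\Vol(X,D) - \Vol(X, D - xE)\big) + o(m^n)$ by definition of the volume. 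On the right, each term $h^0(E, \OO_E((mD - (j-1)E)|_E))$ is bounded above by roughly $\frac{m^{n-1}}{(n-1)!}\Vol(E, (D - tE)|_E) + o(m^{n-1})$ with $t = (j-1)/m$, and summing over $j$ turns the sum into a Riemann sum for $m^n \int_0^x \Vol(E, (D-tE)|_E)\,\mathrm{d}t / (n-1)!$. Dividing by $m^n/n!$ and letting $m \to \infty$ gives the factor $n = \dim X$ in front of the integral.

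The main obstacle is making the asymptotic estimate of the sum uniform: one needs an upper bound for $h^0(E, \OO_E(L_{m,j}))$ that is $O(m^{n-1})$ with a constant independent of $j$ (so that the error terms sum to $o(m^n)$), and one needs the Riemann-sum convergence to be justified — for this I would invoke the continuity and monotonicity of the volume function $t \mapsto \Vol(E, (D-tE)|_E)$ on the interval $[0,x]$, together with a uniform bound such as $h^0(E, \OO_E(A)) \leq C(1 + A\cdot H^{n-2})^{n-1}$ for divisors $A$ in a bounded region, $H$ an ample divisor; here the smoothness of $X$ (hence of a general such $E$, or at least the availability of resolutions) is used to keep the cohomological estimates clean. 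A cleaner alternative, which I would likely adopt, is to cite the differentiability of the volume function (Boucksom–Favre–Jonsson, or \cite[11.4.A]{Positivity2}) and the formula $\frac{d}{dx}\Vol(X, D - xE) \geq -n\,\Vol(E,(D-xE)|_E)$ for the right derivative, then integrate this differential inequality from $0$ to $x$; this packages the Riemann-sum analysis into a known result and yields the statement directly.
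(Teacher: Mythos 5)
Your ``cleaner alternative'' is essentially the paper's proof, and it is the right choice. The paper invokes \cite[Corollary~C]{BFJ09} (equivalently \cite[Corollary~C]{LM09}), which gives the \emph{exact} identity
\[
\Vol(X,D)-\Vol(X,D-xE)=\dim X\cdot\int_0^x \Vol_{X|E}(D-tE)\,\mathrm{d}t
\]
whenever $D-xE$ is still big, where $\Vol_{X|E}$ is the restricted volume; the inequality of the lemma then comes purely from $\Vol_{X|E}(D-tE)\leq \Vol(E,(D-tE)|_E)$, which holds by definition of restricted volume. Your differential inequality $\frac{\mathrm{d}}{\mathrm{d}x}\Vol(X,D-xE)\geq -n\,\Vol(E,(D-xE)|_E)$ is exactly the composite of the BFJ/LM derivative formula with this comparison, so your alternative and the paper's argument are the same proof; you should, however, name the restricted volume and the inequality $\Vol_{X|E}\leq\Vol(E,\cdot|_E)$ explicitly, since that is where the inequality sign enters. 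Two small points you elide: (a) the BFJ/LM formula only applies while $D-xE$ remains big, so as in the paper one should set $\tau=\max\{s\mid D-sE\text{ big}\}$ and treat $x=\tau$ by continuity and $x>\tau$ by monotonicity of the integral; (b) \cite[11.4.A]{Positivity2} predates the differentiability theorem and does not contain it, so cite \cite{BFJ09} or \cite{LM09}, not Lazarsfeld, for the derivative formula.

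Your first sketch (telescoping plus Riemann sums) is a sensible more elementary strategy, but as written it has a real gap that you correctly sense and then sidestep: for a fixed $m$ there is no reason that $h^0(E,(mD-(j-1)E)|_E)$ should be at most $\frac{m^{n-1}}{(n-1)!}\Vol(E,(D-tE)|_E)+o(m^{n-1})$ with an error uniform in $j$; the ratio $h^0(mL)/(m^{n-1}/(n-1)!)$ converges to $\Vol(E,L)$ as $m\to\infty$ but can exceed it for given $m$, and the convergence is not uniform as $L$ varies over a one-parameter family. A crude $O(m^{n-1})$ bound with a $j$-independent constant, as you suggest, would only give $O(m^n)$ for the sum and would lose the correct constant $\dim X$ in front of the integral. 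Making this rigorous amounts to reproving the restricted-volume differentiability theorem, which is why citing \cite{BFJ09}/\cite{LM09} is the right move.
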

\begin{proof}
 Denote
 $\tau:=\max\{s>0\mid D-sE \text{ is big}\}.$

If $x<\tau$, 
then by \cite[Corollary~C]{BFJ09} or \cite[Corollary~C]{LM09}, 
 \[
 \Vol(X, D)-\Vol(X, D-xE)= \dim X\cdot \int_{0}^x\Vol_{X|E}(D-tE) \,\mathrm{d}t.
 \]
 Here $\Vol_{X|E}(D-tE)$ is the {\it restricted volume} (see \cite[\S\,4.2]{BFJ09}) and by definition it is clear that \[\Vol_{X|E}(D-tE)\leq \Vol(E, (D-tE)|_E).\]
Hence we conclude the desired inequality. Moreover, the inequality also holds for $x=\tau$ by continuity. 

If $x> \tau$, then $\Vol(X, D-xE)=0$ and 
\begin{align*}
 \Vol(X, D)\leq {}&\dim X\cdot \int_{0}^\tau\Vol(E, (D-tE)|_E) \,\mathrm{d}t\\
 \leq {}&\dim X\cdot \int_{0}^x\Vol(E, (D-tE)|_E) \,\mathrm{d}t. \qedhere
\end{align*}
\end{proof}

\subsection{Ruled surfaces and weak del Pezzo surfaces}
In this subsection, we recall some basic properties of ruled surfaces and weak del Pezzo surfaces.

First, we recall the computation of volumes of divisors on Hirzebruch surfaces or ruled surfaces over an elliptic curve by the Zariski decomposition (see \cite[Theorem~2.3.19]{Positivity1}).
We will use the following notation. 
\begin{notation}\label{notation Fn En}
 \begin{enumerate}
 \item For $n\geq 0$, denote by $\mathbb{F}_n$ the $n$-th Hirzebruch surface
$\mathbb{P}_{\mathbb{P}^1}(\mathcal{O}_{\mathbb{P}^1}\oplus\mathcal{O}_{
\mathbb{P}^1}(-n))$. 
If $n\geq 1$, denote by $\sigma_0$ and $\ell$ the unique negative section and the ruling of $\mathbb{F}_n$. For consistency of the notation, 
 denote also by $\sigma_0$ and $\ell$ the two rulings of $\mathbb{F}_0$.

\item For $n\geq -1$, denote by $\mathbb{E}_n$ a ruled surface over an elliptic curve with invariant $n$ (see \cite[Notation~V.2.8.1]{H}).
Denote by $\sigma_0$ the section with $\sigma_0^2=-n$ and by $\ell$ the ruling of $\mathbb{E}_n$.
 \end{enumerate}
\end{notation}

\begin{lem}\label{lem Fn En divisors}
Let $S=\mathbb{F}_n$ or $\mathbb{E}_n$ for $n\geq 0$.
Then the following assertions hold.
\begin{enumerate}
 \item The cone of effective divisors of $S$ is generated by $\sigma_0$ and $\ell$, and the cone of nef divisors of $S$ is generated by $\sigma_0+n\ell$ and $\ell$.

 \item 
 For real numbers $s$ and $t$, 
\[
\Vol(S, s\sigma_0+t\ell)=\begin{cases}
 \frac{t^2}{n} & \text{if } ns>t\geq 0;\\
 2st-ns^2 & \text{if } t\geq ns\geq 0 \text{ and }s \geq 0;\\
 0 & \text{otherwise}.
\end{cases}
\]
\end{enumerate}

\end{lem}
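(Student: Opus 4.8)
The plan is to verify the statement by a direct computation using the Zariski decomposition, treating the Hirzebruch surfaces $\mathbb{F}_n$ and the elliptic ruled surfaces $\mathbb{E}_n$ (with invariant $n \geq 0$) simultaneously, since in both cases the intersection pairing on $N^1(S)$ is determined by the same data: $\sigma_0^2 = -n$, $\sigma_0 \cdot \ell = 1$, and $\ell^2 = 0$. First I would establish part (1). For the effective cone, one notes that $\sigma_0$ is irreducible with negative self-intersection (when $n > 0$) and $\ell$ is a nef class with $\ell^2 = 0$, so they span extremal rays; conversely, given an effective divisor $D \equiv s\sigma_0 + t\ell$, intersecting with $\ell$ gives $s = D \cdot \ell \geq 0$, and if $s > 0$ then subtracting $s\sigma_0$ and intersecting the residual effective class with $\sigma_0$ (using $\sigma_0^2 = -n < 0$) forces $t - ns \geq 0$, hence $t \geq 0$; when $n = 0$ the surface is a product and the claim is immediate. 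For the nef cone, a class is nef iff it meets both generators $\sigma_0$ and $\ell$ of the effective cone non-negatively; $(s\sigma_0 + t\ell)\cdot \ell = s$ and $(s\sigma_0 + t\ell)\cdot\sigma_0 = -ns + t$, so nefness is exactly $s \geq 0$ and $t \geq ns$, which is the cone generated by $\ell$ and $\sigma_0 + n\ell$.

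Next I would compute the volume in part (2) by cases according to the position of $D = s\sigma_0 + t\ell$ relative to these cones. If $D$ is not pseudo-effective, i.e. it lies outside the closure of the effective cone, then $\Vol(S,D) = 0$; by part (1) this is the ``otherwise'' case (either $s < 0$, or $s \geq 0$ but $t < 0$). If $D$ is nef, that is $t \geq ns \geq 0$ and $s \geq 0$, then $\Vol(S, D) = D^2 = -ns^2 + 2st$, giving the second line. The remaining case is when $D$ is big (equivalently pseudo-effective with positive volume) but not nef: this happens precisely when $s > 0$ and $0 \leq t < ns$. Here I would write the Zariski decomposition $D = P + N$ with $N = a\sigma_0$ for the appropriate $a > 0$ determined by requiring $P \cdot \sigma_0 = 0$, i.e. $(s\sigma_0 + t\ell - a\sigma_0)\cdot\sigma_0 = -n(s-a) + t = 0$, so $a = s - t/n$ and $P = (t/n)\sigma_0 + t\ell = (t/n)(\sigma_0 + n\ell)$, which is indeed nef. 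Then $\Vol(S, D) = P^2 = (t/n)^2(\sigma_0 + n\ell)^2 = (t/n)^2 \cdot n = t^2/n$, matching the first line. One should also check the condition $ns > t \geq 0$ for this case is consistent with $s \geq 0$ (and note that when $n = 0$ this case is vacuous, so no division by zero occurs).

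The main obstacle, such as it is, lies in making the Zariski decomposition argument rigorous and uniform: one must confirm that $\sigma_0$ is the only curve with negative self-intersection that can appear in the negative part (clear on $\mathbb{F}_n$, and for $\mathbb{E}_n$ with $n > 0$ it follows from the classical fact that $\sigma_0$ is the unique irreducible curve of negative self-intersection, cf. \cite[V.2.8.1]{H}), and that the decomposition $D = P + N$ exhibited above is genuinely the Zariski decomposition (the negative part is effective with negative-definite support, and the positive part is nef and orthogonal to the negative part, which characterizes it uniquely by \cite[Theorem~2.3.19]{Positivity1}). A minor point to handle with care is the boundary behavior at $t = ns$ and $t = 0$, where the formula must agree across cases; this holds since $t^2/n = 2st - ns^2$ when $t = ns$, and both expressions vanish appropriately at the other boundaries, so the piecewise formula is continuous as expected for a volume function.
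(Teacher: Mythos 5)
Your proof of part (2) is essentially identical to the paper's: the nef case is a self-intersection, the big-but-not-nef case (which can only occur when $n>0$) is handled by the Zariski decomposition with negative part supported on $\sigma_0$, and the non-pseudo-effective case gives zero. Your extra observations---that $\sigma_0$ is the unique negative curve so the Zariski decomposition must have this form, and that the formula is continuous across the chamber walls---are correct and in the same spirit.

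For part (1), the paper simply cites \cite[Proposition~2.20, Proposition~2.21]{H}, whereas you attempt a direct verification, and this is where there is a small slip. You argue that for an effective $D\equiv s\sigma_0+t\ell$ one can ``subtract $s\sigma_0$'' and intersect the residual \emph{effective} class with $\sigma_0$. But $D-s\sigma_0$ need not be effective: if $D$ is an irreducible curve in $|\sigma_0+k\ell|$ with $k>0$ on $\mathbb{F}_n$, then $D-\sigma_0$ has negative coefficient along $\sigma_0$. The correct argument is to write $D=c\sigma_0+D'$ where $c$ is the actual coefficient of $\sigma_0$ in $D$ and $D'$ is effective with $\sigma_0\not\subset\operatorname{Supp}(D')$; then $D'\cdot\ell=s-c\geq 0$ and $D'\cdot\sigma_0=-n(s-c)+t\geq 0$, which together give $t\geq n(s-c)\geq 0$. (Also note that your claimed output ``forces $t-ns\geq 0$'' from intersecting $t\ell$ with $\sigma_0$ is off: $(t\ell)\cdot\sigma_0=t$, not $t-ns$.) This is an easily repaired local error in a standard fact the paper does not reprove, so it does not affect the viability of your overall argument.
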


\begin{proof} 
(1) This follows from \cite[Proposition~2.20]{H}.

(2) We may assume that $s\geq 0$ and $t\geq 0$; otherwise $\Vol(S, s\sigma_0+t\ell)=0$.

If $ t\geq ns$, then $s\sigma_0+t\ell$ is nef, so \[\Vol(S,s\sigma_0+t\ell)=(s\sigma_0+t\ell)^2=2st-ns^2.\]

 If $ ns>t$ (in which case $n\neq 0$), then the Zariski decomposition of $s\sigma_0+t\ell$ is
\[s\sigma_0+t\ell=\frac{t}{n}(\sigma_0+n\ell)+(s-\frac{t}{n})\sigma_0,\]
and hence \[\Vol(S,s\sigma_0+t\ell)=\frac{t^2}{n^2}(\sigma_0+n\ell)^2=\frac{t^2}{n}.\qedhere\]
\end{proof}

\begin{lem}\label{lem E-1 divisors}
\begin{enumerate}
 \item The cone of effective divisors of $\mathbb{E}_{-1}$ and the cone of nef divisors of $\mathbb{E}_{-1}$ coincide and are generated by $2\sigma_0- \ell$ and $\ell$.

 \item 
 For real numbers $s$ and $t$, 
\[
\Vol(\mathbb{E}_{-1}, s\sigma_0+t\ell)=\begin{cases} 
 s(s+2t) & \text{if } s\geq 0 \text{ and } s+2t\geq 0;\\
 0 & \text{otherwise}.
\end{cases}
\]
\end{enumerate}

\end{lem}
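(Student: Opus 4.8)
The plan is to reduce the statement to two standard facts about $\mathbb{E}_{-1}$. First, its intersection form in the basis $\sigma_0,\ell$ is $\sigma_0^2=1$, $\sigma_0\cdot\ell=1$, $\ell^2=0$, and its canonical class is $K_{\mathbb{E}_{-1}}\equiv -2\sigma_0+\ell$ (the ruled-surface formula $K\equiv -2\sigma_0+(2g-2-n)\ell$ with $g=1$, $n=-1$), so that $-K_{\mathbb{E}_{-1}}\equiv 2\sigma_0-\ell$. Second --- and this is the defining feature of the invariant $-1$ over an elliptic curve --- $\mathbb{E}_{-1}$ carries no irreducible curve of negative self-intersection; see \cite[\S V.2]{H}.

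For (1), I would first observe that $\ell$ and $2\sigma_0-\ell$ are nef. The fiber $\ell$ is nef, and for $2\sigma_0-\ell=-K_{\mathbb{E}_{-1}}$ one checks $(2\sigma_0-\ell)\cdot C\geq 0$ on every irreducible curve $C$: if $C$ is a fiber this intersection is $2$, and otherwise $C\equiv a\sigma_0+b\ell$ with $a=C\cdot\ell\geq 1$ and, since $\mathbb{E}_{-1}$ has no negative curve, $0\leq C^2=a(a+2b)$, hence $(2\sigma_0-\ell)\cdot C=a+2b\geq 0$. Thus the closed cone $\langle\ell,\,2\sigma_0-\ell\rangle$, which in the coordinates $s\sigma_0+t\ell$ is $\{s\geq 0,\ s+2t\geq 0\}$, is contained in $\operatorname{Nef}(\mathbb{E}_{-1})$. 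Conversely, a pseudoeffective class is a limit of nonnegative combinations of irreducible curves, and every irreducible curve $C\equiv a\sigma_0+b\ell$ has $a=C\cdot\ell\geq 0$ and, by the no-negative-curves property, $a(a+2b)=C^2\geq 0$, so $a+2b\geq 0$; hence such a $C$, and therefore every pseudoeffective class, lies in $\langle\ell,\,2\sigma_0-\ell\rangle$. Since a nef class is a limit of ample, hence effective, classes and so is pseudoeffective, we get $\langle\ell,\,2\sigma_0-\ell\rangle\subseteq\operatorname{Nef}(\mathbb{E}_{-1})\subseteq\overline{\operatorname{Eff}}(\mathbb{E}_{-1})\subseteq\langle\ell,\,2\sigma_0-\ell\rangle$, so all three coincide; this is (1). (Equivalently, once the nef cone is known from the classical ampleness criterion for $\mathbb{E}_{-1}$, one may simply invoke that on a surface $\overline{\operatorname{Eff}}$ is the dual of $\operatorname{Nef}$ under the intersection pairing and that $\langle\ell,\,2\sigma_0-\ell\rangle$ is self-dual.)

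For (2), this follows at once from (1): the hypothesis ``$s\geq 0$ and $s+2t\geq 0$'' says precisely that $D:=s\sigma_0+t\ell$ lies in $\operatorname{Nef}(\mathbb{E}_{-1})=\overline{\operatorname{Eff}}(\mathbb{E}_{-1})$. If it holds, $D$ is nef and $\Vol(\mathbb{E}_{-1},D)=D^2=s^2+2st=s(s+2t)$; if it fails, $D$ is not pseudoeffective, hence not big, so $\Vol(\mathbb{E}_{-1},D)=0$. The only non-formal input is therefore the classical structure of $\mathbb{E}_{-1}$ --- the absence of negative curves, equivalently the coincidence of its nef and pseudoeffective cones --- and this is the step I would expect to be the main point to get right; the rest is elementary intersection theory on a surface of Picard number $2$, parallel to Lemma~\ref{lem Fn En divisors}.
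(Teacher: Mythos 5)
Your proof is correct, and it is in substance the same as the paper's: part (2) follows in both cases by observing that the region $\{s\geq 0,\ s+2t\geq 0\}$ is exactly the nef cone (= pseudoeffective cone), so one computes $D^2$ there and gets $0$ outside. The only difference is in part (1): the paper simply cites \cite[Proposition~2.21]{H} for the cone description, whereas you re-derive it from the classical fact that $\mathbb{E}_{-1}$ has no irreducible curve of negative self-intersection (itself sourced from \cite[\S V.2]{H}) together with elementary intersection theory and the self-duality of $\langle \ell,\,2\sigma_0-\ell\rangle$. Your intersection numbers ($\sigma_0^2=1$, $\sigma_0\cdot\ell=1$, $\ell^2=0$) and the computation $-K_{\mathbb{E}_{-1}}\equiv 2\sigma_0-\ell$ from $K\equiv -2\sigma_0+(2g-2-e)\ell$ with $g=1$, $e=-1$ are correct, and the two-sided inclusion argument $\langle\ell,2\sigma_0-\ell\rangle\subseteq\operatorname{Nef}\subseteq\overline{\operatorname{Eff}}\subseteq\langle\ell,2\sigma_0-\ell\rangle$ is sound. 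So: same route, with the Hartshorne reference pushed one step further back; this is a reasonable trade if one wants the argument more self-contained, at the cost of having to independently justify the absence of negative curves on $\mathbb{E}_{-1}$.
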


\begin{proof} 
(1) This follows from \cite[Proposition~2.21]{H}.

(2) We may assume that $s\geq 0$ and $s+2t\geq 0$; otherwise $\Vol(\mathbb{E}_{-1}, s\sigma_0+t\ell)=0$.
 Then $s\sigma_0+t\ell$ is nef and \[\Vol(\mathbb{E}_{-1},s\sigma_0+t\ell)=(s\sigma_0+t\ell)^2=s^2+2st.\qedhere\]
\end{proof}

 A (weak) Fano surface is usually called a {\it (weak) del Pezzo surface}. The following lemma is well-known to experts.

\begin{lem}\label{lem DP surface}
 Let $S$ be a weak del Pezzo surface with at worst Du Val singularities. Then the following assertions hold:
 \begin{enumerate}
 \item $K_S^2\in \{1,2,\dots, 9\}$;
\item if $-K_S\sim_{\mathbb{Q}}aD+B$ where $D$ is a movable Weil divisor and $B$ is an effective $\mathbb{Q}$-divisor on $S$, then
\[
a\leq \begin{cases} 4& \text{if } K_S^2\leq 8;\\
3& \text{if } K_S^2=9.
\end{cases}
\]
 \end{enumerate}
\end{lem}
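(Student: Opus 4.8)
For (1), the plan is to pass to the minimal resolution $\pi\colon\tilde S\to S$. As $S$ has at worst Du Val singularities, $\pi$ is crepant, so $K_{\tilde S}=\pi^*K_S$ and $-K_{\tilde S}=\pi^*(-K_S)$ is nef and big; hence $\tilde S$ is a smooth weak del Pezzo surface with $K_{\tilde S}^2=K_S^2$. Since $\tilde S$ is rational, $\chi(\mathcal O_{\tilde S})=1$ and $b_1(\tilde S)=0$, so Noether's formula gives $12=K_{\tilde S}^2+e(\tilde S)=K_{\tilde S}^2+2+b_2(\tilde S)$, i.e.\ $K_S^2=10-b_2(\tilde S)$. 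As $b_2(\tilde S)\ge1$ and $K_S^2=(-K_{\tilde S})^2>0$, this forces $K_S^2\in\{1,\dots,9\}$; moreover $b_2(\tilde S)=1$ gives $\tilde S\simeq\mathbb P^2$, so $K_S^2=9$ occurs exactly when $S\simeq\mathbb P^2$.

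For (2), I would start from the two elementary inequalities obtained by intersecting $-K_S\sim_{\mathbb Q}aD+B$ with the nef divisors $-K_S$ and $D$ (recall that a movable divisor on a surface is nef, so $D$ is nef and $D^2\ge0$): since $B$ is effective,
\[
K_S^2=(-K_S)\cdot(aD+B)\ge a\,(-K_S)\cdot D,\qquad (-K_S)\cdot D=(aD+B)\cdot D\ge aD^2 .
\]
Du Val singularities are Gorenstein, so $-K_S$ is Cartier and $(-K_S)\cdot D$ is a positive integer. If $(-K_S)\cdot D\ge2$, then the first inequality gives $a\le K_S^2/2\le4$ when $K_S^2\le8$; and when $K_S^2=9$ we have $S\simeq\mathbb P^2$ and $D\sim\mathcal O(e)$ with $e\ge1$, so $(-K_S)\cdot D=3e\ge3$ and $a\le3$. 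If $(-K_S)\cdot D=1$, then $D^2>0$ — for if $D^2=0$ then $D$ is a positive multiple of the fibre class of a conic bundle structure on $S$, whence $(-K_S)\cdot D$ is even — and the two inequalities combine to $aD^2\le1$, so $a\le\min\bigl(K_S^2,\,1/D^2\bigr)$.

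When $K_S^2\le4$ this already gives $a\le4$, and when $D$ is Cartier (e.g.\ base-point-free) it gives $a\le3$, since then $D^2$ is a positive integer and $a^2\le K_S^2\le9$. The remaining case, $K_S^2\in\{5,6,7,8\}$ with $D$ non-Cartier of positive self-intersection, requires showing $1/D^2\le4$, i.e.\ $D^2\ge1/4$; I would expect the sharp statement to be $16D^2\ge K_S^2$, with equality only for $S\simeq\mathbb P(1,1,2)$ and $D\sim\mathcal O(1)$ (the configuration realising $a=4$, with $B=0$). Proving this bound on the minimal positive self-intersection of a movable Weil divisor is the step I expect to be the main obstacle; I would carry it out by running through the classification of smooth weak del Pezzo surfaces of degree $\le8$ — equivalently, of Du Val del Pezzo surfaces of degree $\le8$ — together with the structure of their divisor class groups. (The other sharp case, $K_S^2=9$ with $a=3$, is $S\simeq\mathbb P^2$ and $D\sim\mathcal O(1)$.)
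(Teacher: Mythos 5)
Your part (1) is essentially the paper's argument (pass to the minimal resolution, use that $K_{S'}^2=K_S^2$), except you invoke Noether's formula where the paper cites the classification of smooth weak del Pezzo surfaces; both are fine.

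For part (2), however, you take a genuinely different route from the paper and leave a real gap. The paper's proof pulls everything back to the smooth minimal resolution $\pi\colon S'\to S$: writing $-K_{S'}\sim_{\mathbb Q} aD'+B'$ where $D'$ is the movable part of $\lfloor\pi^*D\rfloor$ and $B'\ge0$, it then cites \cite[Lemma~4.4]{JZ24}, a bound for movable Weil divisors on \emph{smooth} weak del Pezzo surfaces, and handles $K_S^2=9$ separately. This reduction is exactly what sidesteps the difficulty you run into: on the singular surface a movable Weil divisor can have small fractional $D^2$, and your argument reduces the bound $a\le4$ to the inequality $D^2\ge 1/4$ for non-Cartier movable $D$ with $(-K_S)\cdot D=1$ on Du Val del Pezzos of degree $5$--$8$. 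You do not prove this; you only state that you ``would carry it out by running through the classification,'' so the proof is incomplete at precisely the step you identify as the main obstacle. In addition, the claim that $(-K_S)\cdot D=1$ forces $D^2>0$ is stated too quickly: you assert that $D^2=0$ makes $D$ a ``positive multiple'' of a conic-bundle fibre class and hence $(-K_S)\cdot D$ even, but on the singular $S$ one must still rule out the possibility that $D$ is numerically a \emph{half}-integral multiple of the fibre class (the fibre class need not be primitive in $\mathrm{Cl}(S)$ a priori); this requires a small extra argument, e.g.\ comparing $h^0(S,2D)$ with $h^0$ of the fibre class, or pulling back to $S'$ and using Tsen's theorem. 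Until the $K_S^2\in\{5,6,7,8\}$ non-Cartier case is actually handled, the proposal does not establish the lemma.
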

\begin{proof}
 Let $\pi:S'\to S$ be the minimal resolution of $S$. Then $\pi^*K_{S}=K_{S'}$ and $S'$ is a smooth weak del Pezzo surface. By \cite[Theorem~8.1.15]{Dol12}, $S'$ has a birational morphsim to one of $\bP^2$, $\mathbb{F}_0$, $\mathbb{F}_2$. Then it follows that $K_S^2=K_{S'}^2\in\{1,2,\dots, 9\}.$

 If $-K_S\sim_{\mathbb{Q}}aD+B$ where $D$ is a movable Weil divisor and $B$ is an effective $\mathbb{Q}$-divisor on $S$, then $-K_{S'}\sim_{\mathbb{Q}}aD'+B'$ where $D'\in\text{Mov}|\lfloor \pi^*D\rfloor|$ is a movable Weil divisor and $B'$ is an effective $\mathbb{Q}$-divisor on $S'$. By \cite[Lemma~4.4]{JZ24}, $a\leq 4$. Moreover, if $K_{S}^2=9$, then $S\simeq S'\simeq \mathbb{P}^2$ and $a\leq 3$ by a degree computation. 
\end{proof}
\section{Birational models of weak Fano varieties}\label{sec 3}

In this section, we study birational models of weak Fano varieties. The main goal is to construct $2$ special birational models for a $\mathbb{Q}$-factorial terminal weak Fano $3$-fold (Proposition~\ref{prop MFS}). 

A {\it birational contraction} is a birational map which is surjective in codimension $1$. For birational contractions, we will often use the following well-known lemma to compare divisors.

 \begin{lem}\label{lem bir cont preserve sections}
 Let $f: X\dashrightarrow X'$ be a birational contraction between normal projective varieties and let $D$ be a Weil divisor on $X$.
 Then
 \[
 h^0(X', f_*D)\geq h^0(X, D).
 \]
 In particular, if $D$ is big, then $f_*D$ is big. 
 \end{lem}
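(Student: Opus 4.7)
The plan is to identify sections of $\mathcal{O}_X(D)$ and $\mathcal{O}_{X'}(f_*D)$ with rational functions in the common function field $K(X)=K(X')$ and transfer them along the birational identification. Concretely, for a Weil divisor $D$ on the normal variety $X$, the space $H^0(X,\mathcal{O}_X(D))$ consists of $0$ together with all $\phi\in K(X)^*$ satisfying $\operatorname{div}_X(\phi)+D\geq 0$, and similarly for $X'$. First I would show that if $\phi\in K(X)^*$ satisfies $\operatorname{div}_X(\phi)+D\geq 0$, then regarding $\phi$ as an element of $K(X')$ via $f$, we have $\operatorname{div}_{X'}(\phi)+f_*D\geq 0$; this would yield an injective linear map $H^0(X,\mathcal{O}_X(D))\hookrightarrow H^0(X',\mathcal{O}_{X'}(f_*D))$ and hence the desired inequality of dimensions.

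The verification is carried out coefficient by coefficient. For each prime divisor $E'$ on $X'$, the defining property of a birational contraction (namely, that $f^{-1}$ does not extract any divisor) provides a unique prime divisor $E$ on $X$ whose birational transform under $f$ equals $E'$; equivalently, the discrete valuations on the common function field $K(X)=K(X')$ associated to the normal local rings $\mathcal{O}_{X,E}$ and $\mathcal{O}_{X',E'}$ coincide. Therefore $\operatorname{ord}_{E'}(\phi)=\operatorname{ord}_E(\phi)$, and directly from the definition of the push-forward of a Weil divisor, $\coeff_{E'}(f_*D)=\coeff_E(D)$. Combining these two equalities,
\[
\operatorname{ord}_{E'}(\phi)+\coeff_{E'}(f_*D)=\operatorname{ord}_E(\phi)+\coeff_E(D)\geq 0,
\]
which is precisely the condition for $\phi$ to define a section of $\mathcal{O}_{X'}(f_*D)$.

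The ``in particular'' assertion on bigness then follows by applying the main inequality to $mD$ for each positive integer $m$, using $m\cdot f_*D=f_*(mD)$ together with $\dim X=\dim X'$ (since $f$ is birational), and passing to $\limsup$ in the definition of volume to obtain $\Vol(X',f_*D)\geq \Vol(X,D)>0$. I do not anticipate any serious obstacle: the argument is essentially bookkeeping about the push-forward of Weil (not necessarily $\mathbb{Q}$-Cartier) divisors and the matching of discrete valuations, both of which are tailored to the birational contraction hypothesis.
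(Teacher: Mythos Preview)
Your proposal is correct and follows essentially the same approach as the paper. The paper's proof is terser, simply asserting that since $f$ is a birational contraction, $f_*$ gives a bijection between principal divisors and preserves linear equivalence, hence induces an injection $|D|\hookrightarrow |f_*D|$; your valuation-by-valuation verification that $\operatorname{div}_{X'}(\phi)=f_*\operatorname{div}_X(\phi)$ is exactly what underlies that assertion, and both proofs deduce bigness by applying the inequality to $mD$.
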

 \begin{proof}
 Since $f$ is a birational contraction, $f_*$ gives a bijection between principal divisors and preserves linear equivalence. Hence $f_*$ induces an injective map from $|D|$ to $|f_*D|$, which shows the desired inequality. Applying the inequality for $mD$ for any positive integer $m$, we get the last assertion.
 \end{proof}

\subsection{Mori fiber spaces, varieties of Fano type, and Mori dream spaces}
We recall the concepts of Mori fiber spaces, varieties of Fano type, and Mori dream spaces. 
\begin{definition}
Let $f: Y\to T$ be a surjective morphism between normal projective varieties and let $D$ be an $\mathbb{R}$-divisor on $Y$. Then $f: Y\to T$ is called a {\it $D$-Mori fiber space} if the following conditions are satisfied:
 \begin{enumerate}
 \item $Y$ is $\mathbb{Q}$-factorial; 
 \item $-D$ is ample over $T$;
 
 \item $f_*\OO_Y=\OO_T$;
 
 \item $\rho(Y/T)=1$;
 \item $\dim Y>\dim T$.
 \end{enumerate}
A $K_Y$-Mori fiber space is just called a {\it Mori fiber space}. 
\end{definition}
\begin{remark}
 If $f:Y\to T$ is a $D$-Mori fiber space, then it is a $D'$-Mori fiber space for any $\mathbb{R}$-divisor $D'$ on $Y$ such that $-D'$ is big over $T$ as $\rho(Y/T)=1$. In particular, if $-K_Y$ is big over $T$, then every $D$-Mori fiber space is automatically a Mori fiber space. 
\end{remark}
 
\begin{remark}
 Let $Y\to T$ be a Mori fiber space where $Y$ is a canonical $3$-fold. Then there are three types of Mori fiber spaces:
 \begin{enumerate}
 \item $\dim T=0$ and $Y$ is a $\mathbb{Q}$-factorial canonical Fano $3$-fold with $\rho(Y)=1$;
 
 \item $\dim T=1$ and a general fiber of $Y\to T$ is a del Pezzo surface with Du Val singularities; sometimes $Y\to T$ is called a {\it del Pezzo fibration};

 \item $\dim T=2$ and a general fiber of $Y\to T$ is a smooth rational curve; sometimes $Y\to T$ is called a {\it conic bundle}. 
 \end{enumerate}
\end{remark}
 
 \begin{definition}[{\cite{PS09}}]
A projective variety $X$ is said to be {\it of Fano type} if one of the following equivalent conditions is satisfied:
\begin{enumerate}
 \item there exists a klt pair $(X,B)$ such that $-(K_X+B)$ is ample;
 \item there exists a klt pair $(X,B')$ such that $-(K_X+B')$ is nef and big;
 \item there exists a klt pair $(X,B'')$ such that $K_X+B''\equiv 0$ and $-K_X$ is big. 
\end{enumerate} 
\end{definition}
\begin{remark}\label{fano type}
 
If $X$ is $\mathbb{Q}$-factorial and
of Fano type, then it is a Mori dream space (see \cite[Corollary~1.3.2]{BCHM} and \cite{HK00});
roughly speaking, we can run $D$-MMP for any $\mathbb{R}$-divisor $D$ on $X$ which terminates to a good $D$-minimal model or a $D$-Mori fiber space, depending on whether $D$ is pseudo-effective or not.
 
\end{remark}

\subsection{Properties of birational models of weak Fano varieties}
 In this subsection, we study properties of birational models of weak Fano varieties. 

Recall the following definitions for divisors. 
\begin{definition}\label{fixed divisor}
 Let $X$ be a normal projective variety and let $D$ be a non-zero Weil divisor on $X$. 

\begin{enumerate}
\item $D$ is said to be {\it movable} if 
 the base locus $\textrm{Bs}|D|$ has codimension at least $2$;
 
 \item $D$ is said to be {\it $\mathbb{Q}$-movable} if 
there exists a positive integer $m$ such that $mD$ is movable;

\item $D$ is said to be a {\it $\mathbb{Q}$-fixed prime divisor} if $D$ is a prime divisor and is not $\mathbb{Q}$-movable, or equivalently, $h^0(X, mD)=1$ for any positive integer $m$.
\end{enumerate}
\end{definition}
We will often use the fact that a $\mathbb{Q}$-movable divisor on a surface is nef.

The following lemma shows basic properties of the image of a birational contraction from a 
$\mathbb{Q}$-factorial canonical weak Fano variety.

\begin{lem}\label{lem X to X' bir cont}
 Let $X$ be a $\mathbb{Q}$-factorial canonical weak Fano variety. Let $f: X\dashrightarrow X'$ be a birational contraction where $X'$ is $\mathbb{Q}$-factorial and projective.
 Then the following assertions hold:
 \begin{enumerate}
 \item $X'$ is canonical;
 \item $-K_{X'}$ is big and $\mathbb{Q}$-movable;
 
 \item $X'$ is of Fano type;
 
 \item for any integer $m\geq1$, \[h^0(X,-mK_X)\leq h^0({X'},-mK_{X'}) ;\]
 \item $\Vol(X , -K_{X})\leq \Vol(X', -K_{X'})$. 
 \end{enumerate} 
\end{lem}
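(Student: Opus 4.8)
The plan is to verify the five assertions in the order (1), (3), (2), (4), (5), since each relies on the previous ones. First, for (1): a birational contraction $f\colon X\dashrightarrow X'$ between $\mathbb{Q}$-factorial normal projective varieties can be decomposed (e.g. by taking a common resolution $p\colon W\to X$, $q\colon W\to X'$) into a sequence in which the exceptional behaviour is controlled. The cleanest argument: since $X$ is canonical and $f$ is surjective in codimension $1$, every prime divisor on $X'$ is the strict transform of a prime divisor on $X$, so $K_{X'} = f_*K_X$. Pulling back to $W$, write $K_W = p^*K_X + \sum a_i E_i$ with $a_i\ge 0$ (as $X$ is canonical) and $K_W = q^*K_{X'} + \sum b_j F_j$. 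Comparing, $q^*K_{X'} = p^*K_X + \sum (a_i - c_i)E_i$ for suitable coefficients; the point is that any $q$-exceptional divisor is either $p$-exceptional (hence has discrepancy $\ge 0$ contribution) or is a divisor contracted by $f$, and for the latter one uses the negativity lemma together with the fact that $-K_X$ is nef on $X$ to conclude the discrepancy over $X'$ is $\ge 0$. The mild subtlety is handling divisors contracted by $f$; this is where I expect a short but careful application of the negativity lemma (\cite[Lemma~3.39]{KM}) to $p^*(-K_X)$, which is nef.

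Next, (3): I want to produce a klt pair $(X', B')$ with $-(K_{X'}+B')$ nef and big. On $X$, since $-K_X$ is nef and big, $X$ is of Fano type (take a small general $B$ with $(X,B)$ klt and $-(K_X+B)$ still nef and big, using that $X$ is canonical hence klt and that bigness is open). Then push forward: $B' := f_*B$ gives $-(K_{X'}+B') = f_*(-(K_X+B))$, which is big by Lemma~\ref{lem bir cont preserve sections}. To see $(X',B')$ is klt one argues on a common resolution as in (1), using that discrepancies only improve (or are controlled) under a birational contraction from an Fano-type, hence klt, variety — alternatively, invoke that Fano type is preserved under birational contractions, which is standard (a $\mathbb{Q}$-factorial birational contraction of a Mori dream space of Fano type is again of Fano type). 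For (2): $X'$ is canonical by (1), so $-K_{X'}$ is $\mathbb{R}$-Cartier; it is big because $-K_X$ is big and $f_*(-K_X) = -K_{X'}$ (Lemma~\ref{lem bir cont preserve sections}). For $\mathbb{Q}$-movability: since $X'$ is of Fano type (part (3)) it is a Mori dream space, and $-K_{X'}$ is big, hence we may run a $(-K_{X'})$-MMP which terminates with $-K_{X'}$ becoming semiample on a small modification; more directly, on a Mori dream space the movable cone is rational polyhedral and every big divisor is $\mathbb{Q}$-linearly equivalent to a movable one plus a fixed part, but to get that $-K_{X'}$ itself (not just its class up to the fixed part) is $\mathbb{Q}$-movable I will use: $h^0(X',-mK_{X'})\ge h^0(X,-mK_X)$ and that $-K_X$ on the Mori dream space $X$ is $\mathbb{Q}$-movable (run MMP, no divisor is contracted because...)—actually the simplest route is that a $\mathbb{Q}$-factorial canonical weak Fano variety has $-K_X$ semiample by base-point-free theorem applied to the nef and big $-K_X$, hence $\mathbb{Q}$-movable, and $\mathbb{Q}$-movability is inherited by $f_*$ on a Mori dream space.

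Finally, (4) and (5) are immediate consequences of what precedes: applying Lemma~\ref{lem bir cont preserve sections} to the Weil divisor $-mK_X$ and noting $f_*(-mK_X) = -mK_{X'}$ (again because $f$ contracts no divisor onto which $-K_X$ has positive degree, so the strict transform computes the pushforward of the canonical class) gives $h^0(X,-mK_X)\le h^0(X',-mK_{X'})$, which is (4); then dividing by $m^n/n!$ and taking $\limsup$ gives $\Vol(X,-K_X)\le\Vol(X',-K_{X'})$, which is (5). The main obstacle is (1) — specifically, confirming that discrepancies of $f$-contracted divisors over $X'$ stay $\ge 0$, which needs the negativity lemma applied to the nef divisor $p^*(-K_X)$ on a common resolution; once (1) is secured, (2)–(5) follow quickly from Lemma~\ref{lem bir cont preserve sections} and standard Mori dream space / base-point-free theorem input.
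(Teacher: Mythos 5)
Your route to (1) is genuinely different from the paper's and it does work: applying the negativity lemma to $p^*(-K_X)$ (which is nef, hence $q$-nef) with $q_*\bigl(p^*K_X - q^*K_{X'}\bigr) = 0$ gives $p^*K_X - q^*K_{X'}\geq 0$, and then for each $q$-exceptional $E$ one adds the nonnegative coefficient of this divisor to the discrepancy $a(E,X,0)\geq 0$ (which is $\geq 0$ whether $E$ is $p$-exceptional or a divisor contracted by $f$). The paper instead takes a general $M\in|-nK_X|$ (basepoint-free since $-K_X$ is semi-ample by the basepoint-free theorem), notes that $(X,\tfrac1n M)$ is canonical and $K_X+\tfrac1n M\sim_{\mathbb{Q}}0$, and observes that $p^*(K_X+\tfrac1n M)=q^*(K_{X'}+\tfrac1n M')$ by negativity applied in \emph{both} directions to a $q$-trivial divisor, which is a lighter touch. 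The real advantage of the paper's device is that it simultaneously delivers (1) (because $(X',\tfrac1n M')$ is canonical), (2) (pushforward of a basepoint-free irreducible $M$ gives an irreducible movable member of $|-nK_{X'}|$), \emph{and} (3) (via equivalent condition (3) in the paper's Definition of Fano type: $(X',\tfrac1n M')$ is klt, $K_{X'}+\tfrac1n M'\sim_{\mathbb{Q}}0$, and $-K_{X'}$ is big).

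Your plan for (3) as stated has a gap: you aim to produce a klt pair $(X',B')$ with $-(K_{X'}+B')$ nef and big by pushing forward, but pushforward by a birational contraction does \emph{not} preserve nefness, so after pushforward you only know $-(K_{X'}+B')$ is big — which matches none of the three equivalent characterizations of Fano type. The correct fix is to instead choose $B$ on $X$ with $K_X+B\sim_{\mathbb{Q}}0$ (and coefficients $<1$, e.g.\ $B=\tfrac1n M$), so that $K_{X'}+f_*B\sim_{\mathbb{Q}}0$ persists; then condition (3) of the Fano-type definition applies once one checks $(X',f_*B)$ is klt, which is exactly what the negativity argument gives. Your fallback ("Fano type is preserved under $\mathbb{Q}$-factorial birational contractions") is true but is precisely the content being proved here, so it would be circular as a black box in this context. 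Finally, for (2), your detour through Mori dream spaces is unnecessary: once $-K_X$ is known to be semi-ample, pushing forward a general irreducible member of $|-nK_X|$ (which cannot be $f$-exceptional because it moves) directly exhibits $-nK_{X'}$ as movable.
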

\begin{proof}
As $X$ is canonical and $-K_X$ is nef and big, $-K_X$ is semi-ample by the basepoint-free theorem (\cite[Theorem~3.3]{KM}).
 For any sufficiently large and divisible positive integer $n$, $|-nK_X|$ is basepoint-free. Hence for a general member $M\in |-nK_X|$, $M$ is irreducible and $(X, \frac{1}{n}M)$ is canonical by Bertini's theorem (\cite[Lemma~5.17]{KM}).
Take $W$ to be a common resolution of $X$ and $X'$ with natural morphisms $p:W\to X$ and $q:W\to X'$.
Denote by $M'$ the strict transform of $M$ on $X'$, then 
\[
p^*\Big(K_X+\frac{1}{n}M\Big)=q^*\Big(K_{X'}+\frac{1}{n}M'\Big)\sim_\bQ 0
\]
by the negativity lemma (\cite[Lemma~3.39]{KM}). This implies that for any prime divisor $E$ over ${X'}$, $a(E, X, \frac{1}{n}M)=a(E, {X'}, \frac{1}{n}M')$. As $M$ is irreducible and basepoint-free, it is not exceptional over ${X'}$. So for any prime divisor $E$ which is exceptional over ${X'}$, either $E$ is exceptional over $X$, or $E$ is a prime divisor on $X$ such that $E\neq M$. In either case, $a(E, {X'}, \frac{1}{n}M')=a(E, X, \frac{1}{n}M)\geq 0$. Hence $({X'}, \frac{1}{n}M')$ is canonical. In particular, ${X'}$ is canonical.
Since $-K_X$ is big and semi-ample, $-K_{X'}=f_*(-K_X)$ is big and $\mathbb{Q}$-movable.
Since $K_{X'}+\frac{1}{n}M'\sim_\bQ 0$, $X'$ is of Fano type. 

%We may write $p^*K_X=q^*K_{X'}+E_0$ for some $q$-exceptional divisor $E_0$ on $W$. As $-K_X$ is nef, $E_0$ is effective by the negativity lemma (\cite[Lemma~3.39]{KM}). Then it follows that
By Lemma~\ref{lem bir cont preserve sections}, for any positive integer $m$,
\begin{align*}
 h^0(X, -mK_X) \leq h^0({X'}, -mK_{X'}). 
\end{align*}
Then by definition, $\Vol(X , -K_{X})\leq \Vol(X', -K_{X'})$. 
\end{proof}

The following lemma shows that a prime divisor contracted by a birational contraction is $\mathbb{Q}$-fixed. 

 \begin{lem}\label{lem exc is fixed}
 Let $f: X\dashrightarrow X'$ be a birational contraction between normal projective varieties and let $D$ be a prime divisor on $X$.
 If $D$ is exceptional over $X'$, then $D$ is $\mathbb{Q}$-fixed.
 \end{lem}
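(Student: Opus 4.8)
The plan is to deduce this immediately from Lemma~\ref{lem bir cont preserve sections}, using only the characterization of $\mathbb{Q}$-fixed prime divisors recorded in Definition~\ref{fixed divisor}(3): a prime divisor $D$ on $X$ is $\mathbb{Q}$-fixed if and only if $h^0(X, mD)=1$ for every positive integer $m$. So it suffices to show $h^0(X, mD)=1$ for all $m\geq 1$.

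First I would observe that, since $D$ is exceptional over $X'$ (i.e.\ its image under the birational contraction $f$ has codimension at least $2$ in $X'$), the pushforward $f_*D$ vanishes as a Weil divisor, and hence $f_*(mD)=m\,f_*D=0$ for every $m\geq 1$. Applying Lemma~\ref{lem bir cont preserve sections} to the Weil divisor $mD$ then gives
\[
h^0(X, mD)\leq h^0(X', f_*(mD))=h^0(X', 0)=1.
\]
On the other hand, $mD$ is a non-zero effective divisor, so $h^0(X, mD)\geq 1$. Therefore $h^0(X, mD)=1$ for all $m\geq 1$, which is precisely the assertion that $D$ is $\mathbb{Q}$-fixed.

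There is no genuine obstacle in this argument; the only point that deserves a word of care is the identification $f_*(mD)=0$, which is immediate from the meaning of ``$D$ is exceptional over $X'$'' for a birational contraction, since such a map is surjective in codimension $1$ and thus only fails to be an isomorphism along loci of codimension $\geq 2$ downstairs. Everything else is a formal application of the already-established comparison of spaces of sections under birational contractions.
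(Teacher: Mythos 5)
Your argument is exactly the paper's: apply Lemma~\ref{lem bir cont preserve sections} to $mD$, use $f_*D=0$ to get $h^0(X,mD)\leq h^0(X',0)=1$, and combine with $h^0(X,mD)\geq 1$ since $mD$ is effective. This matches the paper's proof step for step.
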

 \begin{proof}
 By Lemma~\ref{lem bir cont preserve sections}, for any positive integer $m$,
 \[h^0(X', mf_*D)\geq h^0(X, mD)\geq 1.\]
 This implies that $D$ is $\mathbb{Q}$-fixed as $D$ is exceptional. 
 \end{proof}

Conversely, we can contract a 
$\mathbb{Q}$-fixed prime divisor on a weak Fano variety by a birational contraction.

\begin{lem}\label{lem contract fixed}
 Let $X$ be a $\bQ$-factorial canonical weak Fano variety and $D$ be a $\bQ$-fixed prime divisor on $X$. Then there exists a birational contraction $f:X\dashrightarrow Y$ satisfying the following properties:
 \begin{enumerate}
 \item $Y$ is a $\bQ$-factorial canonical weak Fano variety;
 \item $D$ is contracted by $f$ and $\rho(X)=\rho(Y)+1$. 
 \end{enumerate}
 \begin{proof}
 Since $X$ is a Mori dream space by \cite[Corollary~1.3.2]{BCHM}, we can run a $D$-MMP on $X$ which gives a birational contraction $\phi:X\dashrightarrow X'$ such that $X'$ is $\mathbb{Q}$-factorial and $\phi_*D$ is semi-ample. 
 Since $\phi$ is obtained by a $D$-MMP and $D$ is $\mathbb{Q}$-fixed, \[h^0(X', m\phi_*D)=h^0(X, mD)=1\] for any positive integer $m$. This implies that $\phi_*D=0$. Hence $D$ is contracted by $\phi$ and is the only $\phi$-exceptional divisor on $X$. In particular, $\rho(X)=\rho(X')+1$.

 By Lemma~\ref{lem X to X' bir cont}, $X'$ is of Fano type, hence it is a Mori dream space by \cite[Corollary~1.3.2]{BCHM}. 
 Since $-K_{X'}$ is big, we may run a $(-K_{X'})$-MMP on $X'$ which ends up with a variety $Y$ such that $Y$ is $\mathbb{Q}$-factorial and $-K_Y$ is nef and big. Denote by $f: X\dashrightarrow Y$ the natural birational contraction.
By Lemma~\ref{lem X to X' bir cont} for $X\dashrightarrow Y$, $Y$ is canonical, hence it is a $\bQ$-factorial canonical weak Fano variety.
Since $-K_{X'}$ is $\mathbb{Q}$-movable by Lemma~\ref{lem X to X' bir cont} for $X\dashrightarrow X'$, the $(-K_{X'})$-MMP on $X'$ does not contract any divisor. So $X'\dashrightarrow Y$ is isomorphic in codimension $1$ and $\rho(Y)=\rho(X')=\rho(X)-1$. 
 \end{proof}
\end{lem}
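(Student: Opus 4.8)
The plan is to realize $f$ as a composition of two runs of the minimal model program on Mori dream spaces. First observe that $X$ is of Fano type: being canonical it is klt, and $-K_X$ is nef and big, so $(X,0)$ satisfies condition~(2) in the definition of Fano type; hence, being also $\bQ$-factorial, $X$ is a Mori dream space by \cite[Corollary~1.3.2]{BCHM}. Since $D$ is a prime (in particular pseudo-effective) divisor, I would run a $D$-MMP $\phi\colon X\dashrightarrow X'$, which terminates with $X'$ $\bQ$-factorial and projective and with $\phi_*D$ semi-ample.

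Next I would use the hypothesis that $D$ is $\bQ$-fixed. Each step of a $D$-MMP preserves the spaces of global sections of the multiples of (the strict transform of) $D$, so $h^0(X',m\phi_*D)=h^0(X,mD)=1$ for every $m\geq1$; as $\phi_*D$ is semi-ample and effective, this forces $\phi_*D=0$, i.e.\ $D$ is $\phi$-exceptional. Moreover $D$ is the \emph{only} $\phi$-exceptional divisor: the first divisorial contraction in the $D$-MMP contracts an extremal ray $R$ with $D_i\cdot R<0$, where $D_i$ is the strict transform of $D$ at that stage (still prime, since only flips have occurred); because $D_i$ is effective, the contracted divisor is covered by curves meeting $D_i$ negatively, hence lies in $\Supp D_i=D_i$, so this step contracts $D_i$ itself, after which the strict transform of $D$ is zero and the MMP terminates. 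Thus $\rho(X)=\rho(X')+1$. Applying Lemma~\ref{lem X to X' bir cont} to $\phi$ then shows that $X'$ is canonical and of Fano type and that $-K_{X'}$ is big and $\bQ$-movable; but $-K_{X'}$ need not be nef, so $X'$ is not yet known to be weak Fano.

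To remedy this I would run a $(-K_{X'})$-MMP on $X'$ — legitimate, since $X'$ is again a Mori dream space and $-K_{X'}$ is big — which terminates at a variety $Y$ with $-K_Y$ nef and big. The key point is that $-K_{X'}$ is $\bQ$-movable, so its stable base locus has codimension $\geq2$; hence this MMP contracts no divisor, so $X'\dashrightarrow Y$ is an isomorphism in codimension~$1$, $Y$ is $\bQ$-factorial, $\rho(Y)=\rho(X')$, and $K_Y$ is the strict transform of $K_{X'}$. Finally, applying Lemma~\ref{lem X to X' bir cont} to the composite birational contraction $f\colon X\dashrightarrow Y$ shows $Y$ is canonical, hence a $\bQ$-factorial canonical weak Fano variety, which gives~(1); and $D$ is contracted by $f$ because it is already contracted by $\phi$, while $\rho(X)=\rho(X')+1=\rho(Y)+1$, which gives~(2).

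The step I expect to need the most care is the middle one: checking that the $D$-MMP contracts $D$ and nothing else, and dually that the $(-K_{X'})$-MMP contracts no divisor at all. Both come down to identifying which prime divisors a $D$-MMP contracts — namely the divisorial part of the stable base locus of $D$ — and the remaining steps are routine applications of Mori dream space theory together with the already-established Lemma~\ref{lem X to X' bir cont}.
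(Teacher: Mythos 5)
Your proof is correct and follows the same two-step strategy as the paper: run a $D$-MMP on the Mori dream space $X$ to contract $D$ (using the $\bQ$-fixed hypothesis to force $\phi_*D=0$), then run a $(-K_{X'})$-MMP on the Fano-type variety $X'$ to recover nefness of the anti-canonical divisor, with $\bQ$-movability of $-K_{X'}$ guaranteeing no further divisorial contractions. The only difference is that you spell out the extremal-ray argument (any curve $C$ with $D_i\cdot C<0$ must lie in $\Supp D_i$, so the first divisorial contraction of the $D$-MMP contracts $D_i$ itself and the run then terminates) to justify that $D$ is the unique $\phi$-exceptional divisor, a point the paper asserts without elaboration.
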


\subsection{Two birational models of a $\mathbb{Q}$-factorial terminal weak Fano $3$-fold}

In the following proposition, for a $\mathbb{Q}$-factorial terminal weak Fano $3$-fold, we construct $2$ special birational models with many nice geometric restrictions. This is a refinement of \cite[Proposition~4.1]{JZ23} based on ideas in \cite[Proposition~3.9]{CJ20}.
\begin{prop}\label{prop MFS}
Let $X$ be a $\mathbb{Q}$-factorial terminal weak Fano $3$-fold. 
Then there is a diagram
\[\begin{tikzcd}
	X & Y & Z \\
	& S
	\arrow["\phi", dashed, from=1-1, to=1-2]
	\arrow["\psi", dashed, from=1-2, to=1-3]
	\arrow["f", from=1-2, to=2-2]
\end{tikzcd}\]
satisfying the following properties:
\begin{enumerate}
 \item\label{prop MFS part1} $\phi$ is a birational contraction, $\psi$ is isomorphic in codimension $1$;
 \item\label{prop MFS part2} $Y$ is $\mathbb{Q}$-factorial and terminal, $-K_Y$ is big and $\mathbb{Q}$-movable;
 \item $f: Y\to S$ is a projective morphism with $f_*\mathcal{O}_Y=\mathcal{O}_S$ satisfying one of the following properties:
 \begin{enumerate}[label=(\roman*)]
 \item $S$ is a point and $\rho(Y)=1$; 
 
 \item $S\simeq \mathbb{P}^1$; 

 \item 
 $S$ is a del Pezzo surface with at worst Du Val singularities and $\rho(S)=1$.

 \end{enumerate}
 
 \item\label{prop MFS part3} $Z$ is a $\mathbb{Q}$-factorial canonical weak Fano $3$-fold.

\end{enumerate}

\end{prop}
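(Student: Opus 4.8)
The plan is to produce the model $Y$ via minimal model program runs, following the strategy of \cite[Proposition~3.9]{CJ20} and \cite[Proposition~4.1]{JZ23}, and then extract $Z$ from $Y$ by a second MMP. First I would record that, since $X$ is $\mathbb{Q}$-factorial terminal with $-K_X$ nef and big, $X$ is of Fano type and hence a Mori dream space (Remark~\ref{fano type}), so every MMP run below terminates.

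The construction of $Y$ and $f$ splits according to $\rho(X)$. If $\rho(X)=1$, we simply take $\phi=\mathrm{id}$, $Y=X$, and $S$ a point, landing in case (i). If $\rho(X)\geq 2$, the idea is to choose a suitable $\mathbb{Q}$-divisor and run an MMP so that the output has a fibration of relative Picard number $1$. Concretely, I would pick a general $A\in |-nK_X|$ for $n\gg 0$ (which is basepoint-free since $-K_X$ is semi-ample), so that $A$ is prime and $(X,\tfrac1n A)$ is terminal with $K_X+\tfrac1n A\sim_{\mathbb{Q}}0$; then run a $(-K_X+\varepsilon A')$-MMP for an appropriate auxiliary divisor, or more directly run an MMP on $K_X$ relative to nothing to reach a Mori fiber space $f:Y\to S$. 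Since $-K_X$ is big, any Mori fiber space obtained this way is automatically a $K_Y$-Mori fiber space, and the MMP steps are isomorphisms in codimension $1$ away from possible divisorial contractions. The key point is to ensure that $\phi$ is a \emph{birational contraction} and that $Y$ stays terminal and $\mathbb{Q}$-factorial with $-K_Y$ big and $\mathbb{Q}$-movable — the bigness is preserved because $K_X$-MMP steps preserve bigness of $-K$, and $\mathbb{Q}$-movability of $-K_Y$ holds because after reaching the Mori fiber space, $-K_Y$ is $f$-ample hence $f$-movable, and one checks it is movable on $Y$. This gives parts \eqref{prop MFS part1} and \eqref{prop MFS part2}, and the trichotomy in part (3) is just the classification of Mori fiber spaces from a canonical (here terminal) $3$-fold recalled in the excerpt: $\dim S\in\{0,1,2\}$, with $S\simeq\mathbb{P}^1$ when $\dim S=1$, and $S$ a del Pezzo surface with Du Val singularities and $\rho(S)=1$ when $\dim S=2$ (the $\rho(S)=1$ coming from $\rho(Y/S)=1$ together with a base-change/MMP argument on the base, or by choosing the Mori fiber space appropriately).

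For part \eqref{prop MFS part3}, starting from $Y$ (which is $\mathbb{Q}$-factorial terminal with $-K_Y$ big and $\mathbb{Q}$-movable), I would run a $(-K_Y)$-MMP. Since $-K_Y$ is $\mathbb{Q}$-movable, this MMP contracts no divisors, so every step is an isomorphism in codimension $1$; it terminates (as $Y$ is a Mori dream space of Fano type) with a $\mathbb{Q}$-factorial variety $Z$ on which $-K_Z$ is nef and big, i.e.\ a $\mathbb{Q}$-factorial weak Fano $3$-fold. By Lemma~\ref{lem X to X' bir cont} applied to the birational contraction $Y\dashrightarrow Z$ (or rather to $Z\dashrightarrow Y$, noting it is iso in codimension $1$ so the argument is symmetric), $Z$ is canonical. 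Denoting $\psi:Y\dashrightarrow Z$, this map is isomorphic in codimension $1$ as required. In the case $\rho(X)=1$ we have $Y=X$ terminal weak Fano already, and $Z=X$ works directly.

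The main obstacle I anticipate is the fibration case: arranging the MMP on $X$ so that it ends precisely at a Mori fiber space $Y\to S$ \emph{while keeping $\phi$ a birational contraction}, and, when $\dim S=2$, controlling the singularities of $S$ and the Picard number $\rho(S)=1$. One cannot run $K_X$-MMP directly (since $-K_X$ is big, $K_X$ is not pseudo-effective, so $K_X$-MMP ends at a Mori fiber space, but the intermediate divisorial contractions must be shown to be contractions of $\mathbb{Q}$-fixed divisors so that sections are preserved — this is where Lemma~\ref{lem exc is fixed} and Lemma~\ref{lem contract fixed} enter). The cleanest route is probably: first contract all $\mathbb{Q}$-fixed prime divisors on $X$ using Lemma~\ref{lem contract fixed} repeatedly to reach a model with no $\mathbb{Q}$-fixed divisors, then observe that a further MMP to a Mori fiber space is automatically isomorphic in codimension $1$; combined with \cite[Proposition~3.9]{CJ20}'s technique of choosing the right extremal contraction to control the base, this yields the stated $S$. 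I would cross-check the del Pezzo and $\rho(S)=1$ claims against \cite[Proposition~4.1]{JZ23}, of which this proposition is stated to be a refinement.
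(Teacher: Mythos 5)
Your overall architecture (run a $K_X$-MMP to get a Mori fiber space $Y\to S_1$, then run a $(-K_Y)$-MMP to get $Z$) is exactly the paper's approach, and the second step — $(-K_Y)$-MMP terminates in a $\mathbb{Q}$-factorial weak Fano $Z$, is isomorphic in codimension $1$ because $-K_Y$ is $\mathbb{Q}$-movable, and $Z$ is canonical by Lemma~\ref{lem X to X' bir cont} — matches the paper essentially verbatim. However, you unnecessarily second-guess the first step, and the alternative route you propose has real problems.

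First, your worry that one ``cannot run $K_X$-MMP directly'' because ``the intermediate divisorial contractions must be shown to be contractions of $\mathbb{Q}$-fixed divisors so that sections are preserved'' is unfounded. Any MMP output is a birational contraction by definition (divisorial contractions and flips are both surjective in codimension $1$), and Lemma~\ref{lem bir cont preserve sections} and Lemma~\ref{lem X to X' bir cont} guarantee that sections, bigness, and the Fano-type property are preserved for \emph{any} birational contraction from a $\mathbb{Q}$-factorial canonical weak Fano — there is no need for the contracted divisors to be $\mathbb{Q}$-fixed a priori (that they are $\mathbb{Q}$-fixed is a consequence, via Lemma~\ref{lem exc is fixed}, not a hypothesis). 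Moreover, the alternative you sketch — first contracting all $\mathbb{Q}$-fixed divisors via Lemma~\ref{lem contract fixed} — would sacrifice terminality: that lemma only outputs a \emph{canonical} weak Fano, whereas the proposition requires $Y$ terminal. Running $K_X$-MMP directly on the terminal $X$ preserves terminality, which is precisely why the paper does it.

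Second, you hand-wave the case $\dim S_1=2$. The key steps you omit are: (a) the base $S_1$ of a Mori fiber space from a terminal $3$-fold has at worst Du Val singularities by Mori--Prokhorov \cite[Theorem~1.2.7]{MP}; and (b) one then runs a $K_{S_1}$-MMP \emph{on the base} to reach either a del Pezzo surface $S_0$ with $\rho(S_0)=1$ (case (iii), set $S=S_0$) or a Mori fiber space $S_0\to\mathbb{P}^1$ (in which case one sets $S=\mathbb{P}^1$, \emph{not} $S=S_0$, so as to land in case (ii)). Note that the resulting $f\colon Y\to S$ is just a projective morphism with connected fibers, not a Mori fiber space (its relative Picard number may exceed $1$), which is all the proposition asks for; your phrasing ``by choosing the Mori fiber space appropriately'' suggests you were trying to make $Y\to S$ itself a Mori fiber space with $\rho(S)=1$, which is not generally achievable and not what the statement requires.
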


\begin{proof}
{\bf Step 1}. We describe the construction of $f: Y\to S$. This step is the same as \cite[Proposition~4.1]{JZ23}.

We can run a $K_X$-MMP on $X$ which ends up with a Mori fiber space $Y\to S_1$ where $Y$ is $\mathbb{Q}$-factorial and terminal. Recall that by \cite[Theorem~1]{ZQ06}, $X$ is rationally connected, which implies that $Y$
and $S_1$ are also rationally connected.

If $\dim S_1\in \{0,1\}$, then 
we get Case (i) or Case (ii) respectively by taking $S=S_1$.

Now suppose that $\dim S_1=2$. By \cite[Theorem~1.2.7]{MP}, $S_1$ has at worst Du Val singularities. Hence 
we can run a $K_{S_1}$-MMP on $S_1$ which ends up with a surface $S_0$ with at worst Du Val singularities where either $S_0$ is a del Pezzo surface with $\rho(S_0)=1$ or $S_0$ is a surface with a Mori fiber space structure $S_0\to \mathbb{P}^1$. Then we get Case (iii) or Case (ii) by taking $S=S_0$ or $\mathbb{P}^1$ respectively.

%Note that a general fiber of the natural morphism $Y_1\to S$ is a smooth rational curve, so we may run a $K_{Y_1}$-MMP on $Y_1$ over $S$ which ends up with a Mori fiber space $Y\to S'$ over $S$ where $Y$ is $\mathbb{Q}$-factorial and terminal. As $\dim Y-\dim S=1$, we know that $S'\simeq S$. Then we get Case (iii) or Case (iv) respectively.

%In all cases, $Y$ is an outcome of a $K_X$-MMP on $X$. 
Denote by $\phi: X\dashrightarrow Y$ the natural birational map.
By Lemma~\ref{lem X to X' bir cont}, $Y$ is of Fano type and $-K_Y$ is big and $\mathbb{Q}$-movable. 

\medskip

{\bf Step 2}. We describe the construction of $Z$.

Since $-K_Y$ is big and $Y$ is a Mori dream space, we may run a $(-K_Y)$-MMP on $Y$ which ends up with a variety $Z$ such that $Z$ is $\mathbb{Q}$-factorial and $-K_Z$ is nef and big. By Lemma~\ref{lem X to X' bir cont} for $X\dashrightarrow Z$, $Z$ is canonical. Hence $Z$ is a $\mathbb{Q}$-factorial canonical weak Fano $3$-fold.

Denote by $\psi: Y\dashrightarrow Z$ the natural birational map.
Since $-K_Y$ is $\mathbb{Q}$-movable, the $(-K_Y)$-MMP on $Y$ does not contract any divisor. So $\psi$ is isomorphic in codimension $1$. 
\end{proof}

\section{Geometry of weak Fano $3$-folds with $\rho\geq 2$}\label{sec 4}
In order to study the special birational models in Proposition~\ref{prop MFS}, it is crucial to study the geometry of $\mathbb{Q}$-factorial canonical weak Fano $3$-folds with $\rho\geq 2$. 
%Since the special birational models in Proposition~\ref{prop MFS} have $\rho\leq 3$, it is crucial to study the geometry of $\mathbb{Q}$-factorial canonical weak Fano $3$-fold with $\rho\leq 3$. 
The following is the main theorem of this section.

\begin{thm}\label{thm picard 23}
Let $Z$ be a $\mathbb{Q}$-factorial canonical weak Fano $3$-fold with $\rho(Z)\geq 2$. Let $F$ be a movable prime divisor on $Z$. Then one of the following assertions holds:
 \begin{enumerate}
 \item there is a birational contraction $Z\dashrightarrow Z_0$ such that $Z_0$ is a $\mathbb{Q}$-factorial canonical Fano $3$-fold with $\rho(Z_0)=1$; 
 \item $(-K_Z)^3\leq64;$
 \item $-K_Z-\frac{2}{\irr(F)}F$ is not big. 
 \end{enumerate}
\end{thm}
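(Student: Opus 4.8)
The plan is to assume that (1) fails and deduce that (2) or (3) holds, arguing by induction on $\rho(Z)$ and splitting according to whether or not $Z$ carries a $\mathbb{Q}$-fixed prime divisor.

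\textbf{Step 1: removing $\mathbb{Q}$-fixed prime divisors.} If $Z$ has a $\mathbb{Q}$-fixed prime divisor $D$, then $D\neq F$ (since $F$ is movable, hence $\mathbb{Q}$-movable), and Lemma~\ref{lem contract fixed} yields a birational contraction $h\colon Z\dashrightarrow Z_1$ contracting $D$, with $Z_1$ a $\mathbb{Q}$-factorial canonical weak Fano $3$-fold and $\rho(Z_1)=\rho(Z)-1$. If $\rho(Z_1)=1$, then $-K_{Z_1}$ is semi-ample (basepoint-free theorem) and $\mathbb{Q}$-movable (Lemma~\ref{lem X to X' bir cont}), so its anti-canonical morphism contracts no divisor and produces a $\mathbb{Q}$-factorial canonical Fano $3$-fold $Z_0$ with $\rho(Z_0)=1$ together with a birational contraction $Z\dashrightarrow Z_0$, so (1) holds, a contradiction. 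Hence $\rho(Z_1)\ge 2$; putting $F_1:=h_*F$, a movable prime divisor with $\irr(F_1)=\irr(F)$ because $F\dashrightarrow F_1$ is birational, I apply the theorem to $(Z_1,F_1)$ and transport the conclusion back to $Z$: composing birational contractions for (1); using $(-K_Z)^3=\Vol(Z,-K_Z)\le\Vol(Z_1,-K_{Z_1})=(-K_{Z_1})^3$ from Lemma~\ref{lem X to X' bir cont}(5) for (2); pushing forward the big divisor via Lemma~\ref{lem bir cont preserve sections} for (3). Thus I may henceforth assume $Z$ has no $\mathbb{Q}$-fixed prime divisor.

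\textbf{Step 2: producing a conic bundle and a terminal model.} Running a $K_Z$-MMP, which terminates with a Mori fiber space since $-K_Z$ is big, no divisorial contraction can occur: its exceptional divisor would be $\mathbb{Q}$-fixed by Lemma~\ref{lem exc is fixed}. So the MMP is a sequence of flips ending with a Mori fiber space $Z^\flat\to T$, where $Z\dashrightarrow Z^\flat$ is isomorphic in codimension $1$, $\rho(Z^\flat)=\rho(Z)\ge 2$, and $\dim T\ge 1$; hence either $Z^\flat\to T$ is a conic bundle ($\dim T=2$) or $\rho(Z)=2$ and $Z^\flat\to\mathbb{P}^1$ is a del Pezzo fibration. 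In parallel, applying Proposition~\ref{prop MFS} to a $\mathbb{Q}$-factorial terminalization $\mu\colon Y\to Z$ (crepant, so $Y$ is a $\mathbb{Q}$-factorial terminal weak Fano $3$-fold) I obtain a $\mathbb{Q}$-factorial \emph{terminal} model $Y'$ with a fibration $Y'\to S$, where $S$ is a point, $\mathbb{P}^1$, or a Du Val del Pezzo surface with $\rho(S)=1$ (the point case being excluded by $\rho\ge 2$), and $(-K_Z)^3=\Vol(Y,-K_Y)\le\Vol(Y',-K_{Y'})$ again by Lemma~\ref{lem X to X' bir cont}(5). The terminality of $Y'$ is exactly what allows intersection-theoretic control that is unavailable on $Z$, or on the (possibly badly singular) base $T$.

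\textbf{Step 3: the volume/divisor estimate, and the main obstacle.} When $\rho(Z)=2$ and $Z^\flat\to\mathbb{P}^1$ is a del Pezzo fibration, I would run through Lai's analysis of the two extremal contractions of the $\rho=2$ terminal model (\cite{Lai21,LaiLee}), exhibiting in each configuration either a birational contraction of $Z$ to a $\mathbb{Q}$-factorial canonical Fano $3$-fold with $\rho=1$ (case (1)), or an intersection-number bound $(-K_{Y'})^3\le 64$ (case (2)), or a free divisor $G$ pulled back from a conic bundle base with $-K_{Y'}-2G$ not big (case (3)). In the conic bundle case, following \cite{Jiang18,Jiang21}, I bound $\tau:=\sup\{t>0 : p^*(-K_Z)-t\widetilde F \text{ is big}\}$ on a resolution $p\colon W\to Z$, with $\widetilde F$ the strict transform of $F$, by $2/\irr(F)$: by Lemma~\ref{lem vol inequality},
\[
(-K_Z)^3=\Vol(W,p^*(-K_Z))\le \Vol\bigl(W,p^*(-K_Z)-x\widetilde F\bigr)+3\int_{0}^{x}\Vol\!\left(\widetilde F,(p^*(-K_Z)-t\widetilde F)|_{\widetilde F}\right)\mathrm{d}t,
\]
where $\widetilde F$ is a pullback from $S$ (or $T$), hence generically ruled over a curve — one of $\mathbb{F}_n$, $\mathbb{E}_n$, $\mathbb{E}_{-1}$ — or a weak del Pezzo surface; I evaluate the restricted volumes with Lemmas~\ref{lem Fn En divisors}--\ref{lem E-1 divisors} and bound the coefficient of the negative section $\sigma_0$ in $(p^*(-K_Z))|_{\widetilde F}$ via adjunction, the coefficient estimate of Lemma~\ref{lem DP surface}(2), and $\rho(S)=1$. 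This forces $\tau<2/\irr(F)$, hence $-K_Z-\tfrac{2}{\irr(F)}F$ is not big, which is (3). The main obstacle throughout is precisely this conic bundle case when the del Pezzo base has worse-than-Du-Val singularities: no classification of such surfaces is available, the intersection-theoretic computation of \cite{Lai21} on $Z$ breaks down, and the decisive device is to descend the whole computation to the terminal model $Y'$ supplied by Proposition~\ref{prop MFS}, where adjunction on the ruled (or weak del Pezzo) surface and Lemma~\ref{lem DP surface} still apply and the \emph{sharp} inequality $\tau<2/\irr(F)$ can be extracted.
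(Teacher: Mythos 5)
Your Step~1 (contracting $\mathbb{Q}$-fixed prime divisors and inducting on $\rho$, with $\irr$ preserved under strict transform and Lemma~\ref{lem X to X' bir cont} controlling the volume) matches the paper's inductive reduction. The gaps are in Steps~2 and~3.

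First, in Step~2 you run a $K_Z$-MMP rather than a $(-F)$-MMP. This is not a cosmetic difference: the entire point of running the $(-F)$-MMP in the paper is that the resulting Mori fiber space $g_1\colon Z_1\to T_1$ has $\varphi_{1*}F$ \emph{ample over $T_1$}, hence dominating $T_1$, which is the precise hypothesis of Lemma~\ref{lem conic bundle w<2}. With a $K_Z$-MMP you have no control over how $F$ sits relative to $T$; the strict transform of $F$ on $Z^\flat$ could perfectly well be a pullback from $T$, in which case the conic bundle structure tells you nothing about the bigness of $-K_Z-\frac{2}{\irr(F)}F$, and Assertion~(3) does not follow.

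Second, Step~3 conflates two distinct arguments. The proof of Assertion~(3) in the paper is the short Lemma~\ref{lem conic bundle w<2}: restrict to a general fiber $C\simeq\mathbb{P}^1$ of the conic bundle, note $(F'\cdot C)\ge\irr(F)$ (here one needs $F$ to dominate the base!), and use that $-K_C-wF'|_C$ must be big. The Zariski-decomposition and $\mathbb{F}_n$/$\mathbb{E}_n$ computations you invoke are used in Proposition~\ref{prop final} to bound the volume \emph{after} (3) is known, and they concern a specifically constructed pulled-back divisor $G$ on the terminal model $Y$, not the arbitrary movable divisor $F$ of the theorem; your assertion that ``$\widetilde F$ is a pullback from $S$ (or $T$)'' is simply false for a general $F$. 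You also appeal to ``Lai's analysis'' for the $\rho=2$ del Pezzo fibration case, but the paper explicitly states that Lai's computation does not extend here. What the paper does instead is run a \emph{second} MMP, for $-G$ where $G$ is the strict transform of a general fiber, producing a second Mori fiber space $g_2\colon Z_2\to T_2$; the case $\dim T_2=2$ is bounded by a $6K_{G_1}^2\le 54$ estimate, and the case of two del Pezzo fibrations is bounded via the key numerical Claim~\ref{coeff} ($a_1K_{G_1}^2+a_2K_{G_2}^2\le 64$, obtained from the movable restriction $\varphi_{1*}G'|_{G_1}$ and Lemma~\ref{lem DP surface}(2)). Your proposal contains no substitute for this two-fibration analysis, so the $\rho=2$, $T_1\simeq\mathbb{P}^1$ branch is not actually closed.
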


Here recall that for a variety $F$, the {\it degree of irrationality} of $F$ is defined to be 
\[
\irr(F):=\min\left\{d\in \mathbb{Z}_{>0} \, \left| \, \begin{array}{c}\text{there exists a dominant rational map }\\ F\dashrightarrow \mathbb{P}^{\dim F} \text{ of degree } d\end{array}\right.\right\}.
\]
It is clear by definition that $F$ is a rational variety if and only if $\irr(F)=1$.

The following lemma tells when Theorem~\ref{thm picard 23}(3) holds. 

\begin{lem}\label{lem conic bundle w<2}
 Let $Z$ be a $\mathbb{Q}$-factorial klt weak Fano $3$-fold and let $F$ be a prime divisor on $Z$. Suppose that $Z'\to S$ is a Mori fiber space with $\dim S=2$ and
 there is a birational contraction $Z\dashrightarrow Z'$.
If $F$ dominates $S$, then $-K_Z-\frac{2}{\irr(F)}F$ is not big. 
\end{lem}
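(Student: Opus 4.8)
The plan is to argue by contradiction, transferring the problem from $Z$ to $Z'$ and computing on the conic bundle $\pi\colon Z'\to S$. Suppose $-K_Z-\frac{2}{\irr(F)}F$ is big. Write $g\colon Z\dashrightarrow Z'$ for the given birational contraction. Since the composite rational map $\pi\circ g\colon Z\dashrightarrow S$ restricts to a dominant map on $F$ and $\pi$ is a morphism, $g(F)$ cannot have dimension $\leq 1$; as $\dim F=2$, $g$ does not contract $F$. Hence the strict transform $F':=g_*F$ is a prime divisor dominating $S$, and since $g$ is a local isomorphism on a dense open subset of $Z$ whose complement has codimension $\geq 2$, it restricts to a birational map $F\dashrightarrow F'$; in particular $\irr(F)=\irr(F')$. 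Applying Lemma~\ref{lem bir cont preserve sections} to the Weil divisors $-mK_Z-\frac{2m}{\irr(F)}F$ for $m$ sufficiently divisible, we see that $h^0$ still grows like $m^3$ on $Z'$, so $-K_{Z'}-\frac{2}{\irr(F')}F'$ is big. It thus suffices to derive a contradiction from this on $Z'$.

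Now let $\ell$ be a general fibre of $\pi$. Since $Z'$ is normal, its singular locus has codimension $\geq 2$, so $\ell$ lies in the smooth locus of $Z'$; as $\pi$ is a conic bundle, $\ell\cong\mathbb{P}^1$, and $\ell$ is the fibre over a smooth point of $S$, so its normal bundle is $\mathcal{O}_\ell^{\oplus 2}$. By adjunction $-K_{Z'}\cdot\ell=2$. Put $d:=F'\cdot\ell$. Because $F'$ dominates $S$, no general fibre of $\pi$ is contained in $F'$, the induced map $F'\dashrightarrow S$ is generically finite, and by generic smoothness its degree equals $F'\cdot\ell=d\geq 1$. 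Next, $Z$ is a klt weak Fano variety, hence of Fano type, hence rationally connected (by \cite{ZQ06}); therefore $Z'$ and the surface $S$ are rationally connected, so $S$ is rational and admits a birational map $S\dashrightarrow\mathbb{P}^2$. Composing $F'\dashrightarrow S\dashrightarrow\mathbb{P}^2$ yields a dominant rational map of degree $d$, whence $\irr(F')\leq d$.

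Combining these, $\bigl(-K_{Z'}-\frac{2}{\irr(F')}F'\bigr)\cdot\ell=2-\frac{2d}{\irr(F')}\leq 0$, since $d/\irr(F')\geq 1$. On the other hand, the fibres of $\pi$ form a covering family of curves on $Z'$, so a general member $\ell$ is not contained in any fixed proper closed subset; writing any big $\mathbb{R}$-Cartier $\mathbb{R}$-divisor $D$ as $D\sim_{\mathbb{R}}A+E$ with $A$ ample and $E\geq 0$, one gets $D\cdot\ell=A\cdot\ell+E\cdot\ell>0$ (using that $\Supp E$ contains no general fibre). This contradicts the previous inequality. Hence $-K_{Z'}-\frac{2}{\irr(F')}F'$ is not big, and therefore neither is $-K_Z-\frac{2}{\irr(F)}F$.

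The computation on the conic bundle is elementary; the point that needs care is the rationality of $S$, since without it the composition $F'\dashrightarrow S\dashrightarrow\mathbb{P}^2$ would only have degree $d\cdot\irr(S)$, which would be too weak. This is exactly where the (weak) Fano hypothesis on $Z$ enters, via rational connectedness. A secondary point to get right is that bigness passes from $Z$ to the birational model $Z'$, which is handled cleanly by Lemma~\ref{lem bir cont preserve sections} together with the birational invariance of $\irr$.
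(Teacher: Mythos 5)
Your proof is correct and follows essentially the same route as the paper's: use rational connectedness of $Z$ (via \cite{ZQ06}) to deduce rationality of $S$, bound $\irr(F)$ above by $F'\cdot\ell$, and restrict to a general fiber of the conic bundle to obtain the contradiction/bound. One minor imprecision: it is not true that a birational contraction $g\colon Z\dashrightarrow Z'$ is a local isomorphism off a codimension-$\geq 2$ subset of $Z$ (it may contract divisors); the correct statement is that the complement of the isomorphism locus in $Z'$ has codimension $\geq 2$, which, together with the fact that $F$ is not $g$-exceptional, still yields that $F\dashrightarrow F'$ is birational.
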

\begin{proof}
 By \cite[Theorem~1]{ZQ06}, $Z$ is rationally connected, which implies that $Z'$
and $S$ are rationally connected. In particular, $S$ is a rational surface. Denote by $C\simeq \mathbb{P}^1$ a general fiber of $Z'\to S$ and
denote by $F'$ the strict transform of $F$ on $Z'$, then the induced map $F\dashrightarrow S$ is a dominant rational map of degree $(F'\cdot C)$, which implies that $(F'\cdot C)\geq \irr(F)$. 
If $-K_Z-wF$ is big, then $-K_{Z'}-wF'$ is big and hence $-K_C-wF'|_C$ is big. This means that 
\[w<\frac{2}{(F'\cdot C)}\leq \frac{2}{\irr(F)}.\qedhere\] 
\end{proof}

The idea of the proof of Theorem~\ref{thm picard 23} is to look at Mori fiber spaces of $Z$. If all Mori fiber spaces of $Z$ are conic bundles, then we can apply Lemma~\ref{lem conic bundle w<2}; if some Mori fiber space of $Z$ is a del Pezzo fibration, then we can use the computations in \cite{Lai21} or \cite{Jiang21} to bound $(-K_Z)^3$.

\begin{proof}[Proof of Theorem~\ref{thm picard 23}]
By \cite[Theorem~1]{ZQ06}, $Z$ is rationally connected, so every variety dominated by $Z$ is rationally connected.

\medskip

 {\bf Step 1}. We consider the case $\rho(Z)=2$.

 If $Z$ has a $\mathbb{Q}$-fixed prime divisor, then by Lemma~\ref{lem contract fixed}, there exists a birational contraction $Z\dashrightarrow Z_0$ such that $Z_0$ is a $\bQ$-factorial canonical Fano $3$-fold with $\rho(Z_0)=1$. In this case, we get Assertion (1).

From now on, suppose that $Z$ has no $\mathbb{Q}$-fixed prime divisor. Then we can run a $(-F)$-MMP on $Z$, which ends up with a Mori fiber space $g_1: Z_1\to T_1$ such that $\varphi_{1*}F$ is ample over $T_1$, where $\varphi_1: Z\dashrightarrow Z_1$ is the natural map which is isomorphic in codimension $1$ by Lemma~\ref{lem exc is fixed}.

 If $\dim T_1=2$, then $-K_Z-\frac{2}{\irr(F)}F$ is not big by Lemma~\ref{lem conic bundle w<2} and we get Assertion (3).

From now on, suppose that $\dim T_1=1$, then $T_1\simeq \mathbb{P}^1$. 
Denote by $G_1$ a general fiber of $g_1$ and denote by $G$ the strict transform of $G_1$ on $Z$. 
By Lemma~\ref{lem X to X' bir cont}, $Z_1$ is canonical and $-K_{Z_1}$ is big and $\mathbb{Q}$-movable, hence $G_1$ is a weak del Pezzo surface with at worst Du Val singularities.

We can run a $(-G)$-MMP on $Z$, which ends up with a Mori fiber space $g_2: Z_2\to T_2$ such that $\varphi_{2*}G$ is ample over $T_2$, where $\varphi_{2}: Z\dashrightarrow Z_2$ is the natural map which is isomorphic in codimension $1$ by Lemma~\ref{lem exc is fixed}. 

If $\dim T_2=2$, then as $\varphi_{2*}G$ dominates $T_2$, $-K_Z-2G$ is not big by Lemma~\ref{lem conic bundle w<2}. Hence $-K_{Z_1}-2G_{1}$ is not big as $\varphi_1$ is isomoprhic in codimension $1$. 
So by \cite[Lemma~2.5]{Jiang18} and Lemma~\ref{lem DP surface}, 
\begin{align*}
(-K_Z)^3&{}=\Vol(Z_1, -K_{Z_1})\\
 &{}\leq\Vol(-K_{Z_1}-2G_1)+6\Vol(G_1,-K_{G_1})\\
 &{}=6(-K_{G_1})^2\leq54. 
\end{align*}
In this case, we get Assertion (2).

From now on, suppose that $\dim T_2=1$, that is, $T_2\simeq \mathbb{P}^1$. Denote by $G_2$ a general fiber of $g_2$ and denote by $G'$ the strict transform of $G_2$ on $Z$. 
By Lemma~\ref{lem X to X' bir cont}, $Z_2$ is canonical and $-K_{Z_2}$ is big and $\mathbb{Q}$-movable, hence $G_2$ is a weak del Pezzo surface with at worst Du Val singularities.

By construction, $G$ and $G'$ are not big, which means that they generate the cone of effective divisors of $Z$ as $\rho(Z)=2$. 
We may write \[-K_Z\equiv a_1G+a_2G'\] for some positive rational numbers $a_1, a_2$. Take $W$ to be a common resolution of $Z, Z_1, Z_2$ with natural maps $p:W\to Z$, $q_i:W\to Z_i\, (i=1,2)$. 
For $i=1,2$, take $H_i$ to be the strict transform of $G_i$ on $W$, then $H_i$ is a general fiber of $g_i\circ q_i$ by construction.

\[\begin{tikzcd}
	& {H_1} & W & {H_2} \\
	{\varphi_{1*}G=G_1} & {Z_1} & Z & {Z_2} & {G_2=\varphi_{2*}G'} \\
	& {\mathbb{P}^1} && {\mathbb{P}^1}
	\arrow["\subset"{description}, draw=none, from=1-2, to=1-3]
	\arrow[from=1-2, to=2-1]
	\arrow["{q_1}"', from=1-3, to=2-2]
	\arrow["p", from=1-3, to=2-3]
	\arrow["{q_2}", from=1-3, to=2-4]
	\arrow["\supset"{description}, draw=none, from=1-4, to=1-3]
	\arrow[from=1-4, to=2-5]
	\arrow["\subset"{description}, draw=none, from=2-1, to=2-2]
	\arrow["{g_1}"', from=2-2, to=3-2]
	\arrow["{\varphi_1}"', dashed, from=2-3, to=2-2]
	\arrow["{\varphi_2}", dashed, from=2-3, to=2-4]
	\arrow["{g_2}", from=2-4, to=3-4]
	\arrow["\supset"{description}, draw=none, from=2-5, to=2-4]
\end{tikzcd}\]

\begin{claim}\label{coeff}
 \begin{enumerate}
 \item $(p^*(-K_Z)|_{H_i})^2\leq K_{G_i}^2$ for $i=1, 2$;
 \item $a_1 K_{G_1}^2+a_2K_{G_2}^2\leq 64$.
 \end{enumerate}
\end{claim}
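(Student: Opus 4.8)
The plan is to prove the two assertions separately, in both cases pulling the relevant divisors back to the common resolution $W$ and restricting to the general fibers $H_1,H_2$.

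For assertion (1), fix $i\in\{1,2\}$ and compare $p^*(-K_Z)$ with $q_i^*(-K_{Z_i})$ on $W$. Since $\varphi_i$ is an isomorphism in codimension $1$, we have $q_{i*}p^*(-K_Z)=\varphi_{i*}(-K_Z)=-K_{Z_i}$, so $E:=p^*(-K_Z)-q_i^*(-K_{Z_i})$ satisfies $q_{i*}E=0$ and is $q_i$-exceptional. As $-K_Z$ is nef, $p^*(-K_Z)$ is nef on $W$, hence $q_i$-nef, and therefore so is $E$ (it is numerically $q_i$-equivalent to $p^*(-K_Z)$). The negativity lemma \cite[Lemma~3.39]{KM} then forces $-E\geq 0$ (since $E$ is $q_i$-nef and $q_{i*}(-E)=0$), i.e.\ $q_i^*(-K_{Z_i})=p^*(-K_Z)+F_i$ with $F_i:=-E\geq 0$ and $q_i$-exceptional. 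Next take $G_i$, hence $H_i=q_i^*G_i$, general enough that $H_i$ is smooth and $G_i$ meets $\Sing Z_i$ and the (at most one-dimensional) centers on $Z_i$ of the components of $F_i$ only in finitely many points; then $r_i:=q_i|_{H_i}\colon H_i\to G_i$ is a (possibly non-minimal) resolution of the Du Val surface $G_i$, adjunction together with $G_i|_{G_i}=0$ gives $q_i^*(-K_{Z_i})|_{H_i}=r_i^*(-K_{G_i})$, and $R_i:=F_i|_{H_i}$ is an effective divisor supported on $r_i$-exceptional curves, so that $r_i^*(-K_{G_i})\cdot R_i=-K_{G_i}\cdot r_{i*}R_i=0$. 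Restricting the identity for $q_i^*(-K_{Z_i})$ to $H_i$ gives $r_i^*(-K_{G_i})=N_i+R_i$ with $N_i:=p^*(-K_Z)|_{H_i}$ nef, hence
\[
K_{G_i}^2=(r_i^*(-K_{G_i}))^2=r_i^*(-K_{G_i})\cdot N_i=N_i^2+N_i\cdot R_i\geq N_i^2=(p^*(-K_Z)|_{H_i})^2 ,
\]
using $N_i\cdot R_i\geq 0$. This is (1).

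For assertion (2), I use the other fibration. Both $G$ and $G'$ are movable prime divisors on $Z$, since $|G_1|$ and $|G_2|$ contain the basepoint-free pencils pulled back from $|\mathcal{O}_{\mathbb{P}^1}(1)|$ and movability is preserved by the isomorphisms in codimension $1$ $\varphi_1,\varphi_2$. Pushing $-K_Z\equiv a_1G+a_2G'$ forward by $\varphi_1$ gives $-K_{Z_1}\equiv a_1G_1+a_2\varphi_{1*}G'$; restricting to a general fiber $G_1$ and using $G_1|_{G_1}=0$ with adjunction $K_{G_1}=K_{Z_1}|_{G_1}$ yields $-K_{G_1}\equiv a_2 D_1$, where $D_1:=(\varphi_{1*}G')|_{G_1}$. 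As $G_1$ is a rational surface this is a $\mathbb{Q}$-linear equivalence; $D_1$ is a movable Weil divisor on $G_1$ (the restriction of a movable divisor to a general $G_1$) and $D_1\neq 0$ because $-K_{G_1}$ is big. So Lemma~\ref{lem DP surface}(2) (with $B=0$) gives $a_2\leq 4$, and $a_2\leq 3$ if $K_{G_1}^2=9$. The symmetric argument with $\varphi_2$ and $G_2$ gives $a_1\leq 4$, and $a_1\leq 3$ if $K_{G_2}^2=9$. Combined with $K_{G_i}^2\in\{1,\dots,9\}$ (Lemma~\ref{lem DP surface}(1)), a short case check on whether $K_{G_1}^2$ or $K_{G_2}^2$ equals $9$ gives $a_1K_{G_1}^2+a_2K_{G_2}^2\leq 64$ in every case: if $K_{G_1}^2,K_{G_2}^2\leq 8$ the sum is at most $4\cdot 8+4\cdot 8=64$; if exactly one of them is $9$ the sum is at most $4\cdot 9+3\cdot 8=60$; and if both are $9$ the sum is at most $3\cdot 9+3\cdot 9=54$.

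The step needing real care is the choice of the general fibers $G_i$ (hence $H_i$): one must ensure they avoid the singular loci of $Z$ and $Z_i$ and meet the centers of the exceptional divisors only in finitely many points, so that the restriction of a pullback equals the pullback of the restriction, $R_i$ is genuinely $r_i$-exceptional, and $D_i$ is genuinely movable; this is routine but must be spelled out. The only non-bookkeeping subtlety in the computation of (1) is that one cannot expand $(N_i+R_i)^2$ naively (since $R_i^2\le 0$), but must use $r_i^*(-K_{G_i})\cdot R_i=0$ to rewrite $(r_i^*(-K_{G_i}))^2=r_i^*(-K_{G_i})\cdot N_i$; everything else reduces to the negativity lemma, adjunction, Lemma~\ref{lem DP surface}, and the movability of $G$ and $G'$.
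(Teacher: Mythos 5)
Your proof is correct and follows essentially the same route as the paper: for (1), derive $q_i^*(-K_{Z_i})-p^*(-K_Z)\ge 0$ from the negativity lemma and compare self-intersections on $H_i$ using nefness; for (2), push $-K_Z\equiv a_1G+a_2G'$ to each $Z_i$, restrict to a general fiber, and invoke Lemma~\ref{lem DP surface}. The only deviation is in (1), where you take a detour through the $r_i$-exceptionality of $R_i$ to get $r_i^*(-K_{G_i})\cdot R_i=0$; this is unnecessary since, writing $A=p^*(-K_Z)|_{H_i}$ and $B=q_i^*(-K_{Z_i})|_{H_i}$ (both nef) with $B-A\ge 0$, one has directly $B^2-A^2=(B+A)\cdot(B-A)\ge 0$, which is the simpler argument the paper implicitly uses.
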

\begin{proof}
(1) Fix $i\in\{1,2\}$.
 By the negativity lemma (\cite[Lemma~3.39]{KM}), $q_i^{*}(-K_{Z_i})-p^*(-K_Z)\geq 0$ is an effective $q_i$-exceptional $\mathbb{Q}$-divisor on $W$. 
Then $q_i^{*}(-K_{Z_i})|_{H_i}-p^*(-K_Z)|_{H_i}\geq 0$
and $q_i^*(-K_{Z_i})|_{H_i}=(q_i|_{H_i})^*(-K_{G_i})$ is nef. Hence $(p^*(-K_Z)|_{H_i})^2\leq (q_i^*(-K_{Z_i})|_{H_i})^2= K_{G_i}^2$. 

(2) By taking pushforward to $Z_1$, we get $-K_{Z_1}\equiv a_1G_1+a_2\varphi_{1*}G'$, which implies that $-K_{G_1}\equiv a_2\varphi_{1*}G'|_{G_1}$. Note that $\varphi_{1*}G'|_{G_1}$ is a movable Weil divisor on $G_1$, so 
by Lemma~\ref{lem DP surface}, \[
a_2\leq \begin{cases} 4& \text{if } K_{G_1}^2\leq 8;\\
3& \text{if } K_{G_1}^2=9.
\end{cases}
\]
By the same argument, we have 
\[
a_1\leq \begin{cases} 4& \text{if } K_{G_2}^2\leq 8;\\
3& \text{if } K_{G_2}^2=9.
\end{cases}
\]
Then it follows that $a_1 K_{G_1}^2+a_2K_{G_2}^2\leq 64$.
\end{proof}

Then by the projection formula and Claim~\ref{coeff},
\begin{align*}
 (-K_Z)^3={}&(p^*(-K_Z))^2\cdot (a_1H_1+a_2H_2)\\
 ={}&a_1(p^*(-K_Z)|_{H_1})^2+a_2(p^*(-K_Z)|_{H_2})^2 \\ 
 \leq {}&a_1 K_{G_1}^2+a_2K_{G_2}^2\leq 64. 
\end{align*}
 In this case, we get Assertion (2).

\medskip

 {\bf Step 2}. We consider the case $\rho(Z)\geq 3$.

If $Z$ has a $\mathbb{Q}$-fixed prime divisor, then by Lemma~\ref{lem contract fixed}, there exists a birational contraction $Z\dashrightarrow Z'$ such that $Z'$ is a $\bQ$-factorial canonical weak Fano $3$-fold with $\rho(Z')=\rho(Z)-1$. Denote by $F'$ the strict transform of $F$ on $Z'$, then $F'$ is a movable prime divisor with $\irr(F')=\irr(F)$. 
By Lemma~\ref{lem X to X' bir cont}, $(-K_Z)^3\leq (-K_{Z'})^3$. 
Note that if $-K_{Z'}-\frac{2}{\irr(F')}F'$ is not big, then
$-K_Z-\frac{2}{\irr(F)}F$ is not big. 
So if one of Assertions (1)--(3) holds for $Z'$, then it also holds for $Z$.
So we can conclude the proof by induction on $\rho(Z)$ and Step 1.

If $Z$ has no $\mathbb{Q}$-fixed prime divisor, then we can run a $(-F)$-MMP on $Z$, which ends up with a Mori fiber space $Z''\to S'$ such that $F''$ is ample over $S'$, where $F''$ is the strict transform of $F$ on $Z''$. 
By Lemma~\ref{lem exc is fixed}, $\rho(Z'')=\rho(Z)$.
Then $\rho(S')=\rho(Z'')-1\geq 2$, which implies that $\dim S'=2$ and hence $-K_Z-\frac{2}{\irr(F)}F$ is not big by Lemma~\ref{lem conic bundle w<2}.
\end{proof}

\section{Proof of the main theorem}\label{sec 5}
In this section, we prove Theorem~\ref{mainthm}.
As a preparation, we use Theorem~\ref{thm picard 23} to treat the models in Proposition~\ref{prop MFS}.
 
\begin{prop}\label{prop final}
Keep the same notation as in Proposition~\ref{prop MFS}. 
Then one of the following assertions holds:
\begin{enumerate}

 \item there is a birational contraction $Z\dashrightarrow Z_0$ such that $Z_0$ is a $\mathbb{Q}$-factorial canonical Fano $3$-fold with $\rho(Z_0)=1$;
 \item $(-K_Z)^3\leq64$.
\end{enumerate}

\end{prop}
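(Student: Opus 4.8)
The plan is to apply Theorem~\ref{thm picard 23} to $Z$ with a movable prime divisor adapted to the fibration $f\colon Y\to S$, and, when alternative~(3) of that theorem occurs, to bound $(-K_Z)^3$ by working on the terminal model $Y$. Since $\psi$ is isomorphic in codimension~$1$, Lemma~\ref{lem bir cont preserve sections} applied to $\psi$ and to $\psi^{-1}$ gives $h^0(Y,-mK_Y)=h^0(Z,-mK_Z)$ for all $m\geq 1$; hence $(-K_Z)^3=\Vol(Y,-K_Y)$, and an $\mathbb{R}$-divisor on $Y$ is big if and only if its strict transform on $Z$ is big. If $\rho(Z)=\rho(Y)=1$, then $S$ is a point and $-K_Z$, being nef, big and non-zero on a variety of Picard number~$1$, is ample; thus $Z$ is a $\mathbb{Q}$-factorial canonical Fano $3$-fold with $\rho(Z)=1$, so assertion~(1) holds with $Z_0=Z$. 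Assume henceforth that $\rho(Z)=\rho(Y)\geq 2$, i.e.\ $S\simeq\mathbb{P}^1$ or $S$ is a Du Val del Pezzo surface with $\rho(S)=1$.

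In either case one picks a base-point-free linear system $|L|$ on $Y$ pulled back from $S$: for $S\simeq\mathbb{P}^1$, take $L=f^{*}\OO_{\mathbb{P}^1}(1)$, whose general member is a fibre of $f$; for $S$ a del Pezzo surface, take $L=f^{*}C_0$ for a suitable free system $|C_0|$ on $S$ whose general member has the smallest possible degree of irrationality. Let $G$ be a general member of $|L|$. As $Y$ is terminal (hence smooth away from the finite set $\Sing Y$) and $|L|$ is base-point-free, $G$ is a prime divisor which is smooth and disjoint from $\Sing Y$, and $G$ carries a fibration $G\to C$ onto a smooth curve ($C\simeq\mathbb{P}^1$ in the first case, $C$ a general member of $|C_0|$ in the second). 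Let $F$ be the strict transform of $G$ on $Z$; then $F$ is a movable prime divisor on $Z$ with $\irr(F)=\irr(G)$, as $\psi$ induces a birational map $G\dashrightarrow F$. Now apply Theorem~\ref{thm picard 23} to $(Z,F)$: if its conclusion~(1) or~(2) holds, we are done; so assume conclusion~(3), namely $-K_Z-\tfrac{2}{\irr(F)}F$ is not big, equivalently $-K_Y-\tfrac{2}{\irr(G)}G$ is not big.

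On a resolution $\mu\colon\widetilde{Y}\to Y$ which is an isomorphism over $Y\setminus\Sing Y$ one has $\mu^{*}G=\widetilde{G}:=\mu^{-1}_{*}G\cong G$, so $-\mu^{*}K_Y-\tfrac{2}{\irr(G)}\widetilde{G}=\mu^{*}\bigl(-K_Y-\tfrac{2}{\irr(G)}G\bigr)$ is not big; Lemma~\ref{lem vol inequality} applied on $\widetilde{Y}$ with $D=-\mu^{*}K_Y$, $E=\widetilde{G}$ and $x=\tfrac{2}{\irr(G)}$, together with $\widetilde{G}\cong G$, then yields
\[
(-K_Z)^3=\Vol(Y,-K_Y)\leq 3\int_{0}^{2/\irr(G)}\Vol\bigl(G,(-K_Y-tG)|_G\bigr)\,\mathrm{d}t.
\]
By adjunction on the smooth surface $G$, $(-K_Y-tG)|_G=-K_G+(1-t)\,G|_G$, and $-K_Y|_G=-K_G+G|_G$ is $\mathbb{Q}$-movable on $G$ (restriction of a $\mathbb{Q}$-movable divisor to a general member of a base-point-free system), hence nef. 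If $S\simeq\mathbb{P}^1$, then $G|_G\equiv 0$, so the integrand equals the constant $(-K_G)^2=K_G^2$ and $(-K_Z)^3\leq\tfrac{6}{\irr(G)}K_G^2\leq 6K_G^2$; as $K_G^2=0$ would force $(-K_Z)^3=0$, which is absurd, $G$ must be a smooth weak del Pezzo surface, so $K_G^2\leq 9$ and $(-K_Z)^3\leq 54$. If $S$ is a Du Val del Pezzo surface with $\rho(S)=1$, then $G\to C$ is a conic bundle over the smooth curve $C$ and $G|_G=(f|_G)^{*}\bigl(\OO_S(C)|_C\bigr)$ is the pullback of an ample divisor on $C$ of degree bounded in terms of $K_S^2\leq 9$ (Lemma~\ref{lem DP surface}); contracting $G$ to the relatively minimal ruled surface over $C$ — a Hirzebruch surface $\mathbb{F}_n$ if $C$ is rational, an elliptic ruled surface $\mathbb{E}_n$ if $C$ is elliptic — and pushing the class $-K_G+(1-t)G|_G$ forward (which only increases the volume, by Lemma~\ref{lem bir cont preserve sections}), one evaluates the integrand via Lemmas~\ref{lem Fn En divisors} and~\ref{lem E-1 divisors}, and with the chosen $|C_0|$ this leads to $(-K_Z)^3\leq 64$.

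The main obstacle is this last computation. In contrast to the $\mathbb{P}^1$-base case, where $G|_G\equiv 0$ makes the integrand a constant bounded by $9$ and the estimate is immediate, for a del Pezzo base one must (i) choose, on each of the finitely many types of Du Val del Pezzo surface $S$ with $\rho(S)=1$, a suitable system $|C_0|$ making $\irr(G)$ as small as possible — the delicate case being $\irr(G)=2$ with $C$ elliptic — and (ii) follow the divisor class $-K_G+(1-t)G|_G$ through the successive blow-downs to the minimal ruled surface, bounding the resulting invariant $n$ so that the piecewise volume formulas of Lemmas~\ref{lem Fn En divisors}–\ref{lem E-1 divisors}, integrated over $t\in[0,2/\irr(G)]$, remain $\leq 64$ for all admissible~$n$. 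The remaining ingredients — the reduction to $\rho(Z)\geq 2$, the transfer of non-bigness across $\psi$, and the applications of Theorem~\ref{thm picard 23} and Lemma~\ref{lem vol inequality} — are routine.
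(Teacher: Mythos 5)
Your overall strategy coincides with the paper's: construct a basepoint-free divisor $G$ on $Y$ pulled back from $S$, apply Theorem~\ref{thm picard 23} to reduce to the case where $-K_Y-\tfrac{2}{\irr(G)}G$ is not big, and then estimate $\Vol(Y,-K_Y)$ by integrating volumes of $(-K_Y-tG)|_G$ on the ruled surface $G$. Your handling of the $\rho=1$ case (noting $Z$ itself is a $\mathbb{Q}$-factorial canonical Fano $3$-fold with $\rho=1$, so assertion~(1) holds trivially) is in fact slightly cleaner than the paper's, which instead cites Namikawa and Prokhorov to get assertion~(2). Your $S\simeq\mathbb{P}^1$ case is also sound, once one notes that $-K_G$ nef plus $K_G^2>0$ makes $G$ a weak del Pezzo surface.

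However, the del Pezzo base case — which is the heart of the proof and the reason the proposition is worth stating — is not actually carried out. You write ``this leads to $(-K_Z)^3\leq 64$'' and then explicitly flag the computation as ``the main obstacle,'' but you never specify the linear systems $|C_0|$, never bound the invariant $n$ of the minimal ruled model (the paper proves $n\leq m+2$ in the rational-base case and $n\leq m$ in the elliptic-base case), and never evaluate the resulting piecewise-polynomial integrals to verify they stay below $64$. This is not routine: the paper needs to split into three subcases according to $K_S^2$, with a separate ad hoc estimate for $K_S^2=1$. Indeed, your sketch assumes the base curve $C$ of $G\to C$ is always rational or elliptic, but for $K_S^2=1$ the relevant free system is $|-2K_S|$, whose general member has genus $2$ (not an $\mathbb{E}_n$-type base); the paper treats this separately by bounding $\Vol(G,-K_Y|_G)\leq 8$ directly rather than via $\mathbb{F}_n$/$\mathbb{E}_n$ volume formulas. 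So while your framework is the correct one, the proposal leaves precisely the technical content of Proposition~\ref{prop final} undone and would, as written, miss the $K_S^2=1$ case entirely.
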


The idea is to construct a basepoint-free divisor $G$ on the model $Y$ in Proposition~\ref{prop MFS}. If $-K_Y-\frac{2}{\irr(G)}G$ is big, then $-K_Z-\frac{2}{\irr(G)}\psi_*G$ is big and we get Assertion (1) or (2) by Theorem~\ref{thm picard 23}; if $-K_Y-\frac{2}{\irr(G)}G$ is not big, then we can give a good estimate of $(-K_Z)^3$ by volumes of divisors on $G$. 
\begin{proof}
Keep the same notation as in Proposition~\ref{prop MFS}.

We discuss by the property of $Y\to S$. Recall that $\psi:Y\dashrightarrow Z$ is isomorphic in codimension $1$ and hence $(-K_Z)^3=\Vol(Y, -K_Y)$.

If $Y\to S$ satisfies (i) in Proposition~\ref{prop MFS}, then 
$Y$ is a $\mathbb{Q}$-factorial terminal Fano $3$-fold with $\rho(Y)=1$. In this case, $Z\simeq Y$ and $(-K_Z)^3\leq 64$ by \cite{Nami} and \cite[Theorem~1.2]{Prok07}.

If $Y\to S$ satisfies (ii) in Proposition~\ref{prop MFS}, then a general fiber $G$ of $Y\to \mathbb{P}^1$ is a smooth weak del Pezzo surface as $Y$ is terminal and $-K_Y$ is big and $\mathbb{Q}$-movable. Then by Lemma~\ref{lem DP surface}, $K_G^2\leq 9$. 
If $-K_Y-2G$ is not big, then by \cite[Lemma~2.5]{Jiang18}, 
\begin{align*}
 (-K_Z)^3=\Vol(Y, -K_Y)\leq\Vol(Y, -K_Y-2G)+6\Vol(G,-K_G)=6K_G^2\leq54.
\end{align*} 
If $-K_Y-2G$ is big, then $-K_Z-2\psi_*G$ is big and we get Assertion (1) or (2) by Theorem~\ref{thm picard 23}.

From now on, suppose that $Y\to S$ satisfies (iii) in Proposition~\ref{prop MFS}.

We construct a basepoint-free divisor $G$ on $Y$ as in \cite[Proof of Proposition~4.3]{JZ23}.
Take $H$ to be a general member of the following basepoint-free linear system on $S$: 
\[ 
\begin{cases}
|\mathcal{O}_{\mathbb{P}^2}(1)| & \text{if } S\simeq \mathbb{P}^2;\\
|\mathcal{O}_{\mathbb{P}(1, 1, 2)}(2)| & \text{if } S\simeq \mathbb{P}(1, 1, 2);\\
|-K_{S}|& \text{if } 2\leq K_{S}^2\leq 6;\\
|-2K_{S}|& \text{if } K_{S}^2=1.
\end{cases}
\]
These cases cover all possible $S$ by the classification of del Pezzo surfaces with Du Val singularities and Picard rank $1$ (see \cite{MZ88}, \cite[Remark~3.4(ii)]{Prok07}).

Take $G:=f^{-1}(H)=f^*H$ which is a basepoint-free divisor on $Y$.
Then both $H$ and $G$ are smooth by Bertini's theorem. 
Denote by $F$ a general fiber of $G\to H$ which is a smooth rational curve and denote
\[m:=(H^2)=
\begin{cases}
1 & \text{if } S\simeq \mathbb{P}^2;\\
2 & \text{if } S\simeq \mathbb{P}(1, 1, 2);\\
K_S^2 & \text{if } 2\leq K_{S}^2\leq 6;\\
4 & \text{if } K_{S}^2=1.
\end{cases}
\]
Then $G|_G\equiv (H^2)\cdot F=mF$.
We know that \[-K_Y|_G=-K_G+G|_G\equiv -K_G+mF\] is nef as $-K_Y$ is $\mathbb{Q}$-movable.

\medskip

{\bf Case 1}: $K_{S}^2=1$.

In this case, by \cite[Proof of Proposition~4.3, Page 9313]{JZ23},
\[\Vol(G, -K_{Y}|_G)\leq 4(-K_S\cdot H)\leq8.\]
If $-K_Y-2G$ is not big, then by \cite[Lemma~2.5]{Jiang18}, 
\begin{align*}
(-K_Z)^3= \Vol(Y, -K_Y)\leq\Vol(Y, -K_Y-2G)+6\Vol(G,-K_Y|_G)\leq48.
\end{align*}
If $-K_Y-2G$ is big, then $-K_Z-2\psi_*G$ is big and we get Assertion (1) or (2) by Theorem~\ref{thm picard 23}.

\medskip

{\bf Case 2}: $S\simeq \mathbb{P}^2$ or $S\simeq \mathbb{P}(1, 1, 2)$.

In this case, it is clear that $H\simeq \mathbb{P}^1$ by the genus formula.
Then $G\to H\simeq \mathbb{P}^1$ is factored through by $\mathbb{F}_n$ for some $n\in \mathbb{Z}_{\geq0}$.
Keep Notation~\ref{notation Fn En}, where $\ell$ is the ruling of $\mathbb{F}_n\to H\simeq \mathbb{P}^1$. 

We claim that $n\leq m+2$.
 By taking pushforward of $-K_G+mF$ to $\mathbb{F}_n$, we get that \[-K_{\mathbb{F}_n}+m\ell\equiv 2\sigma_0+(n+2+m)\ell\] is nef. Hence by Lemma~\ref{lem Fn En divisors}, we have $n\leq m+2$.

By Theorem~\ref{thm picard 23}, we may assume that $-K_Y-2G$ is not big. Then by Lemma~\ref{lem vol inequality}, 
\begin{align*}
 \Vol(Y,-K_Y) &{}\leq\Vol(Y,-K_Y-2G)+3\int_0^2 \Vol(G,(-K_Y-tG)|_G)\,\mathrm{d}t\\
 &{}=3\int_{0}^2 \Vol(G,-K_G+(m-mt)F)\,\mathrm{d}t\\
 &{}\leq 3\int_{0}^2 \Vol(\mathbb{F}_n,-K_{\mathbb{F}_n}+(m-mt)\ell)\,\mathrm{d}t\\
 &{}=3\int_{0}^2 \Vol(\mathbb{F}_n,2\sigma_0+(n+2+m-mt)\ell)\,\mathrm{d}t\\
 &{}=\frac{3}{m}\int_{n+2-m}^{n+2+m} \Vol(\mathbb{F}_n,2\sigma_0+t\ell)\,\mathrm{d}t.
\end{align*}

If $n=0$, then by Lemma~\ref{lem Fn En divisors},
\begin{align*}
 \Vol(Y,-K_Y)&{}\leq \frac{3}{m}\int_{2-m}^{2+m} \Vol(\mathbb{F}_0,2\sigma_0+t\ell)\,\mathrm{d}t\\
 &{}=\frac{3}{m}\int_{2-m}^{2+m} 4t\,\mathrm{d}t\\
 &{}=48.
\end{align*}

If $n>0$, then by Lemma~\ref{lem Fn En divisors},
\begin{align*}
 \Vol(Y,-K_Y) &{}\leq \frac{3}{m}\int_{n+2-m}^{n+2+m} \Vol(\mathbb{F}_n,2\sigma_0+t\ell)\,\mathrm{d}t\\
 &{}=\frac{3}{m}\int_{n+2-m}^{2n} \frac{t^2}{n}\,\mathrm{d}t+\frac{3}{m}\int_{2n}^{n+2+m} 4(t-n)\,\mathrm{d}t\\
 &{}=\begin{cases}
\frac{(n-1)^3}{n}+48 & \text{if } m=1;\\
\frac{n^2}{2}+48 & \text{if } m=2
\end{cases}\\
&{}\leq 56.
\end{align*}

In summary, $(-K_Z)^3=\Vol(Y, -K_Y)\leq 56$ in this case, assuming that $-K_Y-2G$ is not big. 

\medskip

{\bf Case 3}: $2\leq K_{S}^2\leq 6.$

In this case, $H\in |-K_S|$ is an elliptic curve by the genus formula. 
Then $G\to H$ is factored through by some $\mathbb{E}_n$ for some $n\in \mathbb{Z}_{\geq -1}$ (see \cite[Theorem~V.2.12, Theorem~V.2.15]{H}). 
Keep Notation~\ref{notation Fn En}.

We claim that $n\leq m$.
%Write $-K_Y|_G=-K_G+G|_G\equiv -K_G+mF$, which is nef as $-K_Y$ is $\mathbb{Q}$-movable. 
By taking pushforward of $-K_G+mF$ to $\mathbb{E}_n$, we see that \[-K_{\mathbb{E}_n}+m\ell\equiv 2\sigma_0+(n+m)\ell\] is nef. By 
Lemma~\ref{lem Fn En divisors} and Lemma~\ref{lem E-1 divisors}, 
$n\leq m$.

By Theorem~\ref{thm picard 23}, we may assume that $-K_Y-G$ is not big as $G$ is irrational. Then by Lemma~\ref{lem vol inequality}, 
\begin{align*}
 \Vol(Y,-K_Y)&{}\leq\Vol(Y,-K_Y-G)+3\int_0^1 \Vol(G,(-K_Y-tG)|_G)\,\mathrm{d}t\\
 &{}=3\int_{0}^1 \Vol(G,-K_G+(m-mt)F)\,\mathrm{d}t\\
 &{}\leq 3\int_{0}^1 \Vol(\mathbb{E}_n,-K_{\mathbb{E}_n}+(m-mt)\ell)\,\mathrm{d}t\\
 &{}=3\int_{0}^1 \Vol(\mathbb{E}_n,2\sigma_0+(n+m-mt)\ell)\,\mathrm{d}t\\&{}=\frac{3}{m}\int_{n}^{n+m} \Vol(\mathbb{E}_n,2\sigma_0+t\ell)\,\mathrm{d}t. 
\end{align*}

If $-1\leq n\leq0$, then by 
Lemma~\ref{lem Fn En divisors} and Lemma~\ref{lem E-1 divisors}, $2\sigma_0+t\ell$ is nef for $ n\leq t\leq n+m$. So 
\begin{align*}
 \Vol(Y,-K_Y)&{}\leq\frac{3}{m}\int_{n}^{n+m} \Vol(\mathbb{E}_n,2\sigma_0+t\ell)\,\mathrm{d}t\\
 &{}=\frac{3}{m}\int_{n}^{n+m} (4t-4n)\,\mathrm{d}t\\
 &{}=6m\leq 36.
\end{align*}

Finally, suppose that $1\leq n\leq m$. By Lemma~\ref{lem Fn En divisors}, 
\begin{align*}
 \Vol(Y,-K_Y)&{}\leq \frac{3}{m}\int_{n}^{n+m} \Vol(\mathbb{E}_n,2\sigma_0+t\ell)\,\mathrm{d}t \\
 {}&=\frac{3}{m}\int_{n}^{2n} \frac{t^2}{n}\,\mathrm{d}t +\frac{3}{m}\int_{2n}^{n+m} 4(t-n)\,\mathrm{d}t \\
 &{}=\frac{n^2}{m}+6m\leq 7m\leq 42.
\end{align*}

In summary, $(-K_Z)^3=\Vol(Y,-K_Y)\leq 42$ in this case, assuming that $-K_Y-G$ is not big.
 
Then the proof is completed. 
\end{proof}

\begin{proof}[Proof of Theorem~\ref{mainthm}]
Let $X$ be a canonical weak Fano $3$-fold. By \cite[Theorem~6.23, Theorem~6.25]{KM}, we can take a $\bQ$-factorial terminalization $\pi: X'\longrightarrow X$ such that $K_{X'}=\pi^*K_X$ and $X'$ is a $\mathbb{Q}$-factorial terminal weak Fano $3$-fold.
In particular, $(-K_X)^3=(-K_{X'})^3$ and $h^0(X, -mK_X)=h^0(X', -mK_{X'})$ for any positive integer $m$. 
 
Let $Z$ be the birational model of $X'$ constructed by Proposition~\ref{prop MFS}.
Then by Proposition~\ref{prop final}, either there is a birational contraction $Z\dashrightarrow Z_0$ such that $Z_0$ is a $\mathbb{Q}$-factorial canonical Fano $3$-fold with $\rho(Z_0)=1$ or $(-K_Z)^3\leq 64$.

 In the former case, by Lemma~\ref{lem X to X' bir cont} for $X'\dashrightarrow Z_0$, we have $(-K_{X'})^3\leq(-K_{Z_0})^3\leq72$ by \cite[Theorem~1.1]{JLL25}. 
 In the latter case, by Lemma~\ref{lem X to X' bir cont} for $X'\dashrightarrow Z$, we have $(-K_{X'})^3\leq(-K_{Z})^3\leq 64$.
 Hence in both cases, $(-K_X)^3=(-K_{X'})^3\leq 72$. 
This proves Assertion (1).

By the above discussion, we actually showed that if $(-K_X)^3>64$, then there is a birational contraction $X'\dashrightarrow Z_0$ such that $Z_0$ is a $\mathbb{Q}$-factorial canonical Fano $3$-fold with $\rho(Z_0)=1$. We rephrase this fact in Remark~\ref{rem vol>64}.
 
Following Reid's Riemann--Roch formula (\cite[2.2(3)]{CJ16}), we have
\begin{align*}
h^0(X', -mK_{X'})\leq {}& \frac{1}{12}m(m+1)(2m+1)(-K_{X'})^3+2m+1\\
\leq {}& (2m+1)(6m^2+6m+1),
\end{align*}
and the equality holds only if $(-K_{X'})^3=72$. Thus we get Assertion (2).

Finally, we discuss the equality case. 

Suppose that one of the inequalities in Assertions (1) and (2) holds, then we always have $(-K_X)^3=72$. Moreover, there is a birational contraction $X'\dashrightarrow Z_0$ such that $Z_0$ is a $\mathbb{Q}$-factorial canonical Fano $3$-fold with $\rho(Z_0)=1$ and $(-K_{X'})^3=(-K_{Z_0})^3=72.$ Then $Z_0\simeq \mathbb{P}(1,1,1,3)$ or $\mathbb{P}(1,1,4,6)$ by \cite[Theorem~1.1]{JLL25}. Take a commen resolution $W$ of $X'$ and $Z_0$ with morphisms $p: W\to X'$ and $q: W\to Z_0$, 
then we have
\[p^*(-K_{X'})=q^*(-K_{Z_0})-E\]
where $E\geq0$ is an effective $q$-exceptional $\mathbb{Q}$-divisor on $W$ by the negativity lemma (\cite[Lemma~3.39]{KM}). 
Since both $p^*(-K_{X'})$ and $q^*(-K_{Z_0})$ are nef and 
$(-K_{X'})^3=(-K_{Z_0})^3=72,$ we get $E=0$ by \cite[Theorem~A]{FKL16}. 
In particular, 
\[p^*\pi^*(-K_{X})=p^*(-K_{X'})=q^*(-K_{Z_0}).\]
As $-K_{Z_0}$ is ample, there is a natural birational morphism $g: X\to Z_0$ with $g^*K_{Z_0}=K_{X}$, which is just the anti-canonical model of $X$. We just take $X_0=Z_0$ to finish the proof.
Furthermore, if $-K_X$ is ample, then $g$ is an isomorphism and $X\simeq X_0$.

Conversely, if there is a birational morphism $g: X\to X_0$ where $X_0\simeq \mathbb{P}(1,1,1,3)$ or $\mathbb{P}(1,1,4,6)$ and $g^*K_{X_0}=K_X$, then it is a direct computation that all inequalities in Assertions (1) and (2) are equalities. Here we can use the usual Riemann--Roch formula as $X_0$ is Gorenstein. 
\end{proof}

\section*{Acknowledgments} 
The authors would like to thank Haidong Liu and Jie Liu for helpful discussions. This work was supported by National Key Research and Development Program of China (No. 2023YFA1010600, No. 2020YFA0713200) and NSFC for Innovative Research Groups (No. 12121001). C.~Jiang is a member of the Key Laboratory of Mathematics for Nonlinear Sciences, Fudan University.

\end{document}